\renewcommand{\l}{\langle}
\renewcommand{\r}{\rangle}
\renewcommand{\phi}{\varphi}
\newcommand{\eps}{\epsilon}
\newcommand{\Z}{\mathbb{Z}}
\newcommand{\close}[1]{\overline{#1}}
\newcommand{\weak}{\rightharpoonup}
\newcommand{\supp}{\operatorname{supp}}
\newcommand{\Llra}{\Longleftrightarrow}
\newcommand{\R}{{\mathbb R}}
\newcommand{\BN}{{\mathbb N}}
\newcommand{\BR}{{\mathbb R}}
\newcommand{\BZ}{{\mathbb Z}}
\newcommand{\cA}{{\mathcal A}}
\newcommand{\cC}{{\mathcal C}}
\newcommand{\cK}{{\mathcal K}}
\newcommand{\cT}{{\mathcal T}}
\newcommand{\cZ}{{\mathcal Z}}
\newcommand{\ra}{\rightarrow}
         \newcommand{\e}{\varepsilon}
   \newcommand{\cpt}{\text{cap}}
\newtheorem{thm}[subsubsection]{Theorem}
\newtheorem{lemma}[subsubsection]{Lemma}
\newtheorem{remark}[subsubsection]{Remark}
\newtheorem{example}[subsubsection]{Example}
\newtheorem{definition}[subsubsection]{Definition}
\numberwithin{equation}{section}
\begin{document}
\title{Highly Oscillating Thin Obstacles}

\author{Ki-ahm Lee}
\address{Seoul National University, Seoul, 151-747, Korea
\& Korea Institute for Advanced Study, Seoul,130-722, Korea}
\email{kiahm@snu.ac.kr}
\author{Martin Str\"omqvist}
\address{Royal Institute of Technology, Stockholm, SE-100 44, Sweden}
\email{stromqv@math.kth.se}
\author{Minha Yoo}
\address{Seoul National University, Seoul, 151-747, Korea}
\email{minha00@snu.ac.kr}
\thanks{We would like to thank  Henrik Shahgholian  for  his helpful comments. Ki-ahm Lee has been supported by the Korea-Sweden Research Cooperation Program.
This project is part of an STINT (Sweden)-NRF (Korea) research cooperation program.
}
\begin{abstract}
The focus of this paper is on a thin obstacle problem where the obstacle is defined on the intersection between a
hyper-plane $\Gamma$ in $\R^n$
and a periodic perforation $\cT_\e$ of $\R^n$, depending on a small parameter $\e>0$. As $\e\to 0$,
it is crucial to estimate the frequency of intersections and to determine this number locally.
This is done using strong tools from uniform distribution.
By employing classical estimates for the discrepancy of sequences of type $\{k\alpha\}_{k=1}^\infty$, $\alpha\in\R$, 
we are able to extract rather precise information about the set $\Gamma\cap\cT_\eps$.
As $\e\to0$, we determine the limit $u$ of the solution $u_\e$ to the obstacle problem
in the perforated domain, in terms of a limit equation it solves. We obtain the typical "strange term" behaviour for
the limit problem, but with a different constant taking into account the contribution of all different intersections,
that we call the averaged capacity.
Our result depends on the normal direction of the plane, but holds for a.e. normal on the unit sphere in $\R^n$.
\end{abstract}

\thanks{
 {\it Keywords:} Homogenization, Thin obstacle , Ergodicity, Discrepancy, Corrector\\
{ \it MSC[2010]:} 35B27, 35J87, 76M50, 37A25, 78M40
}
\maketitle

\section{Introduction}\label{sec-intro}
\subsection{Formulation of the problem}
We consider the thin obstacle problem in a class of perforated domains.
For $\e>0$ we construct a perforated domain $\Gamma_\e$ as follows.
Let $Q_\e=(-\e/2,\e/2)^n$ and let $Q_\e(x)=x+(-\e/2,\e/2)^n$.
Note that the cubes $Q_\e(\e k)$ for $k\in \BZ^n$ are disjoint and
\[
\bigcup_{k\in\Z^n} \overline{Q_\e(\e k)}=\R^n.
\]
Next we perforate each cube by a
small hole: Let $T$ be compact subset of the unit ball $B_1$ with Lipschitz boundary $\partial T$, and for $a_\e<\e/2$ and
$k\in\Z^n$, define $T_\e=a_\e T$ and $T_\e^k = a_\e T+\e k$. The set
\[
\mathcal{T}_\e = \bigcup_{k\in\Z^n}T_\e^k
\]
is to be thought of as a periodic background in the problem.

Let $\Omega$ be a domain in $\R^n$, and let $\Gamma = \Gamma_\nu $ be a hyper plane with surface measure $\sigma$, defined by
\begin{equation} \label{gamma}
\Gamma_\nu = \{x \in \R^n : x \cdot \nu = x^0 \cdot \nu \}
\end{equation}
for given $\nu \in S^{n-1}$ and $x^0 \in \R^n$.

The set
\[
\Gamma_\e = \Gamma \cap\left(\bigcup_{k\in \BZ^n} T_\e^k\right)
\]
describes the intersection between the hyper-plane and the periodic background.
Then, for a given $\psi\in L^\infty(\Omega)\cap H^1(\Omega)$ such that $\psi\le 0 $ on $\partial\Omega$, we define the obstacle
\[
\psi_\e=\psi\chi_{\Gamma_\e}=
\left\{\begin{array}{ll}
\psi(x)&\text{if }x\in\Gamma_\e,\\
0&\text{if }x\not\in\Gamma_\e,
\end{array}\right.
\]
and the admissible set
\begin{equation}
\cK_{\psi_\e}=\{v\in H_0^1(\Omega):v\ge\psi_\e\}.
\label{eq-K}
\end{equation}
The inequality in \eqref{eq-K} is to be interpreted in the sense of trace, i.e.
$\text{Trace}_{\Gamma_\e}(u_\e-\psi)\ge0$ on $\Gamma_\e$ and $u_\e\ge0$ a.e. in $\Omega\setminus\Gamma_\e$.
We consider the following thin obstacle problem, for $f\in L^2(\Omega)$:
\begin{equation}
\left\{\begin{aligned}
&\int_\Omega\nabla u_\e\cdot\nabla(v-u_\e)dx\ge \int_\Omega(v-u_\e)fdx,\quad\text{for all }v\in\cK_{\psi_\e},\\
&u_\e\in\cK_{\psi_\e}.
\end{aligned}\right.
\label{eq-main-e}
\end{equation}
The variational inequality \eqref{eq-main-e} has a unique solution $u_\e\in\cK_{\psi_\e}$ which can be obtained
as the unique minimizer of the strictly convex and coercive functional
\[
J(v):=\int_\Omega\frac12|\nabla v|^2-fv dx, \quad v\in\cK_{\psi_\e}.
\]
We refer to Evans \cite{MR2597943} for the definition of trace and for the above minimization problem.



As $\e\to0,$ we are interested in the asymptotic behaviour of
$u_\e$. We want to determine $u=\lim_{\e\to0}u_\e$ in terms of
an effective equation that it solves. The procedure of finding the
effective equation, that does not depend on any microstructure in
$\Omega$, is called homogenization.

\subsection{Related Works}

In \cite{ShaLee}, Lee and Shahgholian study the Diriclet problem in a domain $\Omega$ with oscillating boundary data.
The boundary data is the restriction to $\partial\Omega$ of a function $g_\e$ that is $\e$ - periodic in $\BR^n$.
The common feature of that problem and the present is that the asymptotic behaviour is very sensitive with respect to
the normal field of the boundary, or in this case, the normal of the hyper-plane.

Obstacle problems in perforated domains, i.e. obstacle problems where the obstacle is given by
\[
\psi_\e= \psi\chi_{\mathcal{T}_\e}
\]
for some given $\mathcal{T}_\e$, have been studied extensively.
A common structure of the set describing the perforations $\mathcal{T}_\e$ is
\[
\mathcal{T}_\e=\bigcup_{k\in\Z^n}T_\e^k
\]
for some given set $T$ and $a_\e=o(\e)$, or a periodic distribution of holes on a hyper-surface in $\Omega$.
The paper \cite{MR1493040} by Cioranescu and Murat is a standard reference for these problems and the framework
developed therein
includes the hyper-surface case. 
Other interesting references for perforated domains include \cite{MR695419}, \cite{MR2504035}, \cite{MR2440878}.

The novelty of this paper is that the perforated surface $\Gamma_\e$ does not have a lattice structure in
the sense that the perforations are not evenly spaced, and this introduces a substantial difficulty.
The approach taken in this paper is based on the energy method where the construction of correctors is essential,
see section ~\ref{outline}. Our main reference for this is \cite{MR1493040}.

\subsection{Main Theorem}

To describe the effective equation for $u=\lim_{\e\to0}u_\e$, we introduce the averaged capacity,
depending on a direction $\nu$. First we recall the usual capacity of a subset of $\BR^n$, $A\subset B_1$ in case $n=2$.
\begin{definition}\label{capacity}
If $A$ is a compact subset of $\BR^n$, the capacity of $A$, denoted $\cpt (A)$, is
\[
\text{cap}(A) = \inf\left\{\int_{\R^n}|\nabla\phi|^2dx:\phi\in C_c^\infty(\R^n),\;\phi\ge 1\text{ on }A\right\}, \;\text{ if }n\ge 3,
\]
and
\[
\text{cap}(A) = \inf\left\{\int_{B_1}|\nabla\phi|^2dx:\phi\in C_c^\infty(B_1),\;\phi\ge 1\text{ on }A\right\}, \;\text{ if }n=2.
\]
\end{definition}
There are several ways of extending the capacity to non-compact sets, see for example \cite{MR1158660}.
\begin{definition}\label{def-avcap}[Averaged Capacity]
Suppose $\Gamma$ is a hyper plane in $\R^n$ with normal $\nu\in S^{n-1}$ and
define the family of hyper planes
\[
\Gamma_\nu(s) := \Gamma + s \nu,\quad s\in\R.
\]
If $T\subset \R^n$ and
\begin{equation}\label{f}
f(s)=\cpt(T\cap\Gamma_\nu(s))
\end{equation}
is integrable, we set
\[
\cpt_\nu(T):=\int_{-\infty}^\infty f(s)ds
\]
and call this quantity the \emph{averaged capacity} of $T$ with respect to $\nu$.
The set $T\cap\Gamma_\nu(s)$ is illustrated in Figure 1.
\end{definition}

\begin{thm}\label{thm-main}
Assume $n\ge 3$ and for a given $\nu\in S^{n-1}$ and $x^0\in\R^n$, 
let $\Gamma$ be the hyper-plane defined in \eqref{gamma}. Let $u_\e$ be the solution to
\eqref{eq-main-e} and set $a_\e = \e^{\frac{n}{n-1}}$. Then, for a.e. $\nu\in S^{n-1}$,
$u_\e\weak u$ in $H_0^1(\Omega)$ where $u$ is the unique minimizer of
\begin{equation}\label{functional-main-thm}
J_\nu(v):= \int_\Omega\frac12|\nabla v|^2 -fvdx + \frac12\cpt_\nu(T)\int_\Gamma((\psi-v)^+)^2d\sigma,\quad v\ge 0.
\end{equation}

In particular, $u$ is the solution of
\begin{equation}\label{eq-main-thm}
-\Delta u = \cpt_\nu(T)(\psi-u)^+d\sigma + f\chi_{\{u>0\}}.
\end{equation}
\end{thm}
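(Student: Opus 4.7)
The plan is to follow the energy method of Cioranescu--Murat \cite{MR1493040} for obstacle problems in perforated domains, with the new ingredient being the use of discrepancy estimates from uniform distribution theory to handle the non-periodic structure of $\Gamma\cap\mathcal{T}_\e$. First I would obtain a uniform bound $\|u_\e\|_{H_0^1(\Omega)}\le C$ by testing \eqref{eq-main-e} against a fixed admissible competitor (for instance an extension of $\psi^+$ in $H_0^1(\Omega)$), so that along a subsequence $u_{\e_j}\weak u$ in $H_0^1(\Omega)$ and strongly in $L^2(\Omega)$, with $u\ge 0$. What remains is to identify $u$ as the unique minimizer of $J_\nu$.

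The key object is a corrector $w_\e=\sum_k w_\e^k$, where for each $k\in\Z^n$ with $T_\e^k\cap\Gamma\neq\emptyset$ the function $w_\e^k\in H_0^1(Q_\e(\e k))$ is the capacitary potential of the thin set $T_\e^k\cap\Gamma$ inside $Q_\e(\e k)$. Writing $s_k:=(x^0-\e k)\cdot\nu/a_\e\in[-1,1]$, the change of variables $y=(x-\e k)/a_\e$ turns $T_\e^k\cap\Gamma$ into $T\cap\Gamma_\nu(s_k)$, and the standard scaling of capacity yields
\[
\int_{Q_\e(\e k)}|\nabla w_\e^k|^2\,dx=a_\e^{n-2}\,\cpt\bigl(T\cap\Gamma_\nu(s_k)\bigr)+o(a_\e^{n-2}).
\]
The exponent $n/(n-1)$ in $a_\e$ is chosen precisely so that the scaling constants match: the number of cubes with hole meeting $\Gamma$ inside a subregion $U\subset\Omega$ is $\sim 2a_\e|\Gamma\cap U|/\e^n$, and combined with $a_\e^{n-2}$ produces $a_\e^{n-1}/\e^n=1$. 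What remains in the limit is a Riemann-type sum of $f(s)=\cpt(T\cap\Gamma_\nu(s))$ sampled at $\{s_k\}$. For a.e.\ $\nu\in S^{n-1}$ the sequence $\{k\cdot\nu\bmod 1\}_{k\in\Z^n}$ has vanishing discrepancy on expanding boxes by Erd\H{o}s--Tur\'an--Koksma, and since $f$ is of bounded variation on $[-1,1]$ the Koksma--Hlawka inequality gives, for any $\phi\in C(\overline\Omega)$,
\[
a_\e^{n-2}\sum_{k}\phi(\e k)\,\cpt\bigl(T\cap\Gamma_\nu(s_k)\bigr)\longrightarrow \cpt_\nu(T)\int_\Gamma\phi\,d\sigma.
\]

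To identify the limit functional, I would build for each admissible $v\ge 0$ in $H_0^1(\Omega)$ a recovery sequence
\[
v_\e=v+(\psi-v)^+ w_\e,
\]
which lies in $\cK_{\psi_\e}$ because $w_\e\equiv 1$ on every thin obstacle $T_\e^k\cap\Gamma$ in the sense of trace, while $v_\e\ge 0$. Since $\|w_\e\|_{L^2(\Omega)}\to 0$ (the support of $w_\e$ has measure $O(a_\e^n/\e^n)$) and $\|\nabla w_\e\|_{L^2(\Omega)}$ stays bounded, $w_\e\weak 0$ in $H_0^1(\Omega)$ and $v_\e\weak v$. Expanding $|\nabla v_\e|^2$, the cross terms vanish in the limit by weak convergence of $\nabla w_\e$ to zero and dominated convergence, while the dominant term $((\psi-v)^+)^2|\nabla w_\e|^2$ converges, by the averaging formula above applied to $\phi=((\psi-v)^+)^2$, to $\cpt_\nu(T)\int_\Gamma((\psi-v)^+)^2\,d\sigma$; hence $J(v_\e)\to J_\nu(v)$. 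Testing \eqref{eq-main-e} with $v_\e$ gives $\limsup J(u_\e)\le J_\nu(v)$ for every admissible $v$. The matching liminf $\liminf J(u_\e)\ge J_\nu(u)$ is obtained by the Cioranescu--Murat strange-term machinery: one tests the inequality with the comparison $u+\eta\,(\psi-u)^+ w_\e$ truncated against $u_\e$, and uses the corrector to extract the missing capacity contribution carried by the oscillating part of $u_\e$. Combining the two inequalities with the uniqueness of minimizer of the strictly convex $J_\nu$, the full sequence $u_\e$ converges to $u$, and writing the Euler--Lagrange equation of $J_\nu$ subject to $u\ge 0$ recovers \eqref{eq-main-thm}.

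The main obstacle is the averaging step. The intersection parameters $\{s_k\}$ along $\Gamma$ are not periodic and are extremely sensitive to $\nu$; controlling the capacity sum requires quantitative equidistribution of $\{k\cdot\nu\bmod 1\}_{k\in\Z^n}$ compatible with the only-BV integrand $f(s)=\cpt(T\cap\Gamma_\nu(s))$, and this is precisely what restricts the theorem to a.e.\ $\nu\in S^{n-1}$ rather than every $\nu$.
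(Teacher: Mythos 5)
Your overall architecture coincides with the paper's: a priori bound and subsequence extraction, cell-by-cell capacitary correctors $w_\e^k$ with the scaling $a_\e^{n-2}\cpt(T\cap\Gamma_\nu(s_k))$, a recovery sequence for the $\limsup$ inequality (your $v+(\psi-v)^+w_\e$ is literally the paper's test function $v_\e=(w_\e-1)(\psi-v)^++(\psi-v)^-+\psi$ rewritten), the Cioranescu--Murat lower semicontinuity applied to $(\psi-u_\e)^+$ for the $\liminf$, and uniqueness of the minimizer of the strictly convex $J_\nu$ to upgrade to convergence of the full sequence. Two small slips: the support of $w_\e$ is the union of the balls $B_{\e/2}(\e k)$ over the $k$ with $\gamma_\e^k\neq\emptyset$ (measure $O(a_\e)$, not $O(a_\e^n/\e^n)$), though $w_\e\weak 0$ still holds; and passing $\langle\Delta w_\e,\phi z_\e\rangle$ to the limit for merely bounded $z_\e$ vanishing on $\Gamma_\e$ requires the trace/Egorov/barrier argument of the paper's Lemma~\ref{lem-cor-main}, which you do not address.

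The genuine gap is in the averaging step, which is the paper's main technical contribution. ``Vanishing discrepancy plus Koksma--Hlawka with $f$ of bounded variation'' is not sufficient here: after rescaling, the effective integrand on the torus is $t\mapsto\cpt(T\cap\Gamma_\nu(t\e/a_\e))$, supported in a window of width $a_\e/\e=\e^{1/(n-1)}\to 0$ with total variation of order one, while the main term is itself of order $\e^{1/(n-1)}$. Koksma--Hlawka therefore only yields a useful statement if the discrepancy of the $N(\e)\approx\e^{1-n}$ points $\{\alpha\cdot k'\}$ is $o(\e^{1/(n-1)})$, i.e.\ a \emph{quantitative} rate valid for a.e.\ $\nu$; the Erd\H{o}s--Tur\'an--Koksma inequality alone gives no such metric statement. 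This is exactly what the paper's Lemma~\ref{lemma-equidist} supplies, by reducing to the one-dimensional discrepancy of $\{k\alpha_i\}$ for a single ratio $\alpha_i=\nu_i/\nu_n$ in the class $\cA$ (where $D_N=o(N^{-p})$ for all $p<1$, a.e.) and a Fubini-type summation over lattice lines, together with Lemma~\ref{aenormal} to see that a.e.\ $\nu$ admits such a ratio. You correctly identify this as ``the main obstacle,'' but your proposal does not actually resolve it, so as written the proof of the key counting estimate --- and hence of Lemma~\ref{corr.energy} and property (3) of the corrector lemma --- is missing.
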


\begin{remark}
It is interesting to consider the case
when $\Gamma$ a more general hyper-surface, for example a piece of a sphere or cone. In chapter \ref{sec-surface}, we prove Theorem \ref{thm-surf} which is similar to Theorem \ref{thm-main} but only valid in dimension $n \ge 5$ and when $\Gamma$ satisfies the condition \ref{smallcap}. 
We are able to apply theorem \ref{thm-surf} when $\Gamma$ is a cylinder, see example \ref{ex-cylinder}, but we cannot
verify its hypothesis when $\Gamma$ is a piece of a sphere or a cone. This would require much more delicate error estimates of discrepancy, and remains an interesting problem.
\end{remark}

\begin{figure} \label{fig-1}
\includegraphics[width=130mm]{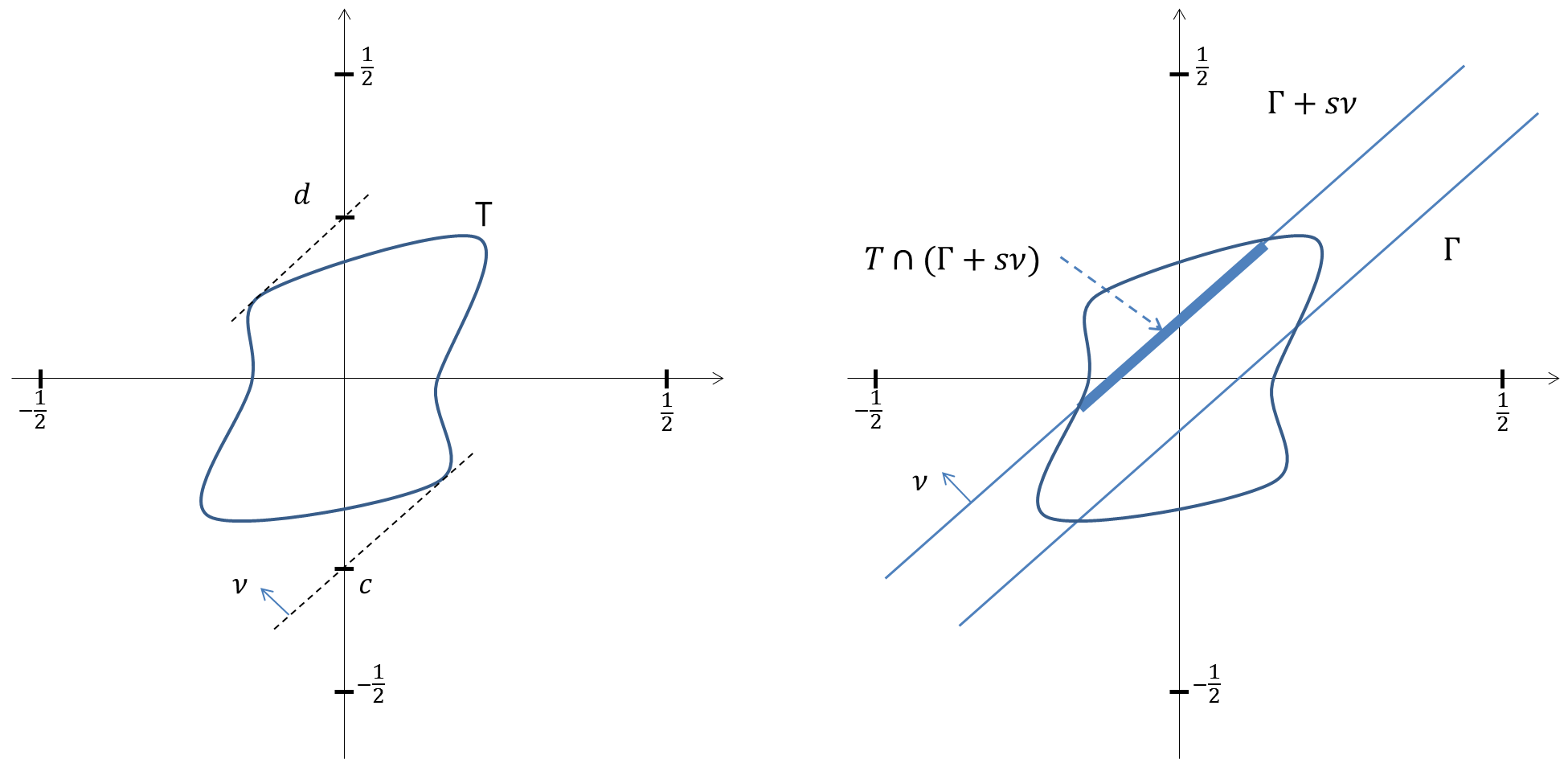}
\caption{The shape of $T$ and $T \cap (\Gamma + s \nu)$.}
\end{figure}

\subsection{Outline of the paper} \label{outline}
Our proof can be divided into two parts.
\begin{itemize}
\item
Local estimate of $\#(\Gamma\cap\cT_\e)$.

Since the sets $T_\e^k$ are located close to points $\e k$, $k\in\BZ^n$,
we need to understand how often the hyper plane $\Gamma$ intersects a certain neighborhood of
$\e k$, for all $\e k\in \Omega$.
To localize this, consider a set $E\subset \text{proj}_{\R^{n-1}}\Omega\cap\Gamma $.
Then $\Gamma$ is close to a point $\e k = \e(k',k_n)$ above $E$ if the $x_n$ - coordinate in
$(\e k',x_n)\in\Gamma$ is close to $\e k_n$.
Thus we are led to study the distribution of the $x_n$ - coordinates of $\Gamma$ at points
$\e k'\in \e\BZ^{n-1}$. This is done in Section \ref{Uniform distribution} on uniform distribution,
where we recall some classical results and use them to prove the important Lemma~\ref{lemma-equidist}.
We prove that for a.e. normal direction $\nu$ of the plane $\Gamma$, this distribution is uniform,
up to a small error.

\item Construction of correctors.

Having control on the intersections we construct correctors $w_\e$ that satisfy some standard assumptions, see lemma \ref{lem-cor-main}.
The energy of the correctors, which is closely related to the capacity of the set $\Gamma_\e$,
has to be finite and this determines the critical rate of $a_\e$. Below in ~\ref{critical rate}
we give a heuristic explanation on how to determine $a_\e$ using uniform distribution.
In Section ~\ref{sec-cor} we develop further properties of the correctors and prove \ref{lem-cor-main}.

\end{itemize}

We remark that the character of the problem may change drastically if the normal of the plane is altered,
or if the plane is translated. For example, if $0\in T$, the plane $\Gamma=\{x_n=0\}$ intersects every $T_\e^k\subset \Omega$
for $k=(k',0)$, but any small change in the normal will create completely different intersections.
Also, for $\Gamma=\{x_n=c\}$ the number of intersections may be zero or very large depending on a choice
of subsequence $\e_j\to0$. However, our result is that the character of the problem is the same
for a.e. normal direction, and is translation invariant.

\subsection{List of Notations}

\[
\begin{array}{ll}

\Omega &\text{A bounded open subset of }\R^n,\; n\ge3.\\

|\cdot| & n\text{ - dimensional Lebesgue measure}. \\

\chi_E& \text{The characteristic function of the set }E. \\

H_0^1(\Omega) & \text{the closure of }C_c^\infty(\Omega) \text{ w.r.t. the norm }\\
        &\|u\|_{H_0^1(\Omega)}=\left(\int_\Omega|\nabla u|^2dx\right)^{\frac12}. \\

Q_\e(\e k)&=(-\e/2,\e/2)^n+\e k,\quad k\in\BZ^n.\\
a_\e & = \e^{\frac{n}{n-1}}. \\

T & \text{a compact subset of }B_1\text{ such that }\close{\text{int}(T)}= T\text{ and }\partial T\text{ is Lipschitz}.\\

T_\e & = a_\e T.\\

T_\e^k & = a_\e T+\e k,\quad k\in\BZ^n. \\

\cT_\e &= \bigcup_{k\in\BZ^n}T_\e^k.\\

\Omega_\e&\Omega\setminus\cT_\e.\\

\Gamma =\Gamma_\nu & \text{a hypersurface in }\BR^n\text{ with normal }\nu. \\

\sigma & \text{surface measure on }\Gamma.\\

\Gamma_\e & =\Gamma\cap\cT_\e.\\

\gamma_\e^k & = \Gamma\cap T_\e^k.\\

\cpt(A) & \text{the capacity of the set }A, \text{ see Definition}~\ref{capacity}. \\

\cpt_\nu(T) &\text{the averaged capacity of the set }T, \text{see Definition}~\ref{def-avcap}.\\

\Gamma_\Omega' & = \text{proj}_{\BR^{n-1}}\Omega\cap \Gamma - \text{the projection of }\Omega\cap \Gamma\text{ on }
\BR^{n-1}.\\

\cZ_\e & = \e^{-1}\Gamma_\Omega'\cap\BZ^{n-1}.\\

\# A & = \text{the number of elements of a finite set }A. \\

N(\e) & = \#\cZ_\e = \#\left(\e^{-1}\Gamma_\Omega'\cap\BZ^{n-1}\right).\\

A(\e^p,t) & = \#\{k'\in \cZ_\e:\alpha\cdot k'/\BZ\in (t,t+\e^p)/\BZ\}. \\

\end{array}
\]

\subsection{Heuristic arguments and computation of the critical rate}
\label{critical rate}

The proof relies on the construction of correctors similar to those of Cioranescu and Murat in \cite{MR1493040}.
We will prove the existence of a function $w_\e$, called corrector, that satisfies the properties in lemma \ref{lem-cor-main}.
Once this has been established our main theorem follows in a rather standard way, see Lemma~\ref{lem-11} - ~\ref{lem-13}.
The function $(\psi-u_\e)^+$ is used in place of $z_\e$, which is bounded if $\psi$ is.

\begin{figure} \label{fig-2}
\includegraphics[width=80mm]{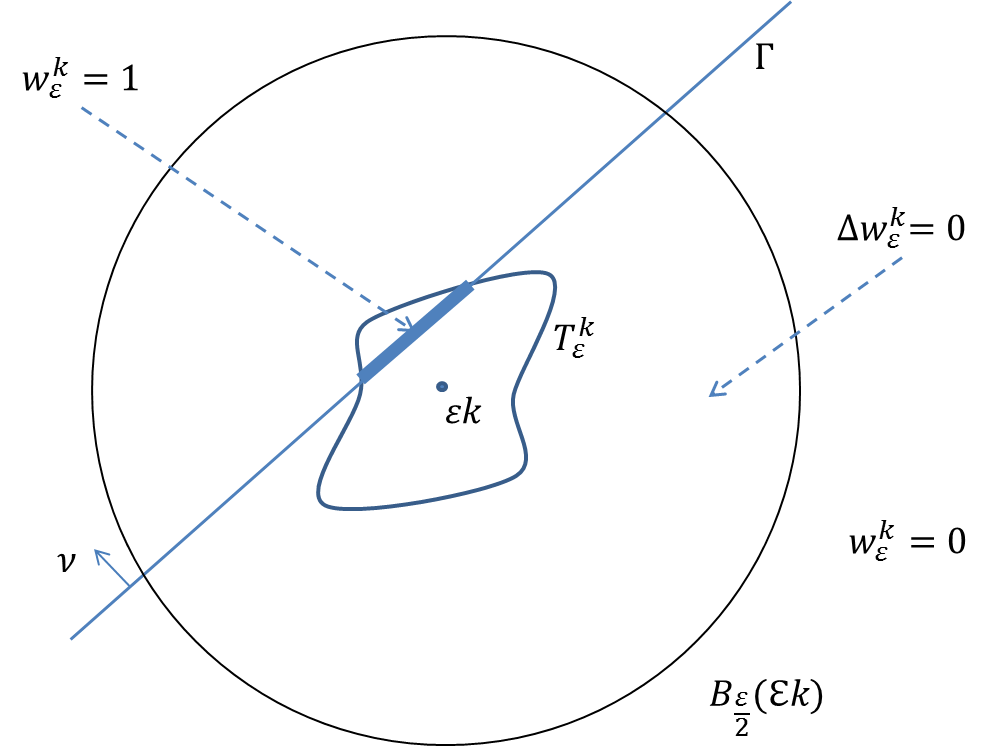}
\caption{}
\end{figure}

We obtain such $w_\e$ by defining $w_\e$ locally near the intersection between $\Gamma$ and $T_\e^k$, a component of
$\mathcal{T}_\e$. Suppose $a_\e<\e/2$ is a sequence whose decay rate is to be determined.
For
\[
\gamma_\e^k:=\Gamma\cap T_\e^k,
\]
we have diam$(\gamma_\e^k)=o(a_\e)$ and
\[
\gamma_\e^k = a_\e\left(a_\e^{-1}\Gamma\cap\e a_\e^{-1}k+T\right):=a_\e(\Gamma\cap(T+\text{translation})).
\]
We define
\begin{equation}\label{the_corrector}
w_\e=\sum_kw^k_\e,
\end{equation}
where $w_\e^k$ is the restriction of $w_\e$ to $Q_\e(\e k)$, given by
\begin{equation}\label{eq-cor-loc}
\begin{cases}
w^k_\e=1&\text{on }\gamma_\e^k\\
\Delta w^k_\e=0&\text{in }B_{\e/2}\setminus \gamma_\e^k\\
w^k_\e=0&\text{in }Q_\e\setminus B_{\e/2},
\end{cases}
\end{equation}
see Figure 2.
The energy of the correctors, i.e. the quantity
\[
\int_\Omega|\nabla w_\e|^2dx
\]
has to be uniformly bounded from above and below in order for lemma \ref{lem-cor-main} to hold.
We note that
\[
\int_\Omega|\nabla w_\e|^2dx=\sum_k\int_\Omega|\nabla w^k_\e|^2dx,
\]
and
\[
\int_\Omega|\nabla w^k_\e|^2dx\approx \cpt(\gamma_\e^k).
\]
Since
\[
\cpt(\gamma_\e^k) = \cpt(a_\e(\Gamma\cap(T+\text{translation}))) =
\left\{\begin{array}{l}
O(a_\e^{n-2})\text{ if }n\ge 3,\\
O((-\log a_\e)^{-1}) \text{ if }n=2,
\end{array}\right.
\]
we should have
\begin{equation}\label{eq-o0}
\int_\Omega|\nabla w_\e|^2dx\approx C\sum_k\cpt(\gamma_\e^k) =
\left\{\begin{array}{l}
CA_\e (a_\e)^{n-2},\quad n\ge 3,\\
CA_\e (-\log a_\e)^{-1},\quad n=2,
\end{array}\right.
\end{equation}
where $A_\e$ is the number of terms in the sum.

The above energy calculation tells us that the energy of $w_\e$ is related with the number of intersection points $A_\e$ between
$T_\e^k$ and $\Gamma$. So we need to estimate the size of $A_\e$. It is here that standard theory
of uniform distribution and discrepancy enters into the game.

To simplify the exposition we assume for the time being that
\[
\Omega=(0,1)^n\text{ and }\Gamma=\{x\cdot\nu=0\}.
\]
Suppose also $\nu_n\neq 0$ and that $\Gamma$ may be represented as
\[
\Gamma=\{(x',\alpha\cdot x'):x'\in (0,1)^{n-1}\}, \quad \alpha=(-\nu_1/\nu_n,\ldots,-\nu_{n-1}/\nu_n).
\]
To count the number of intersection points, we just need to consider
$k'\in \e^{-1}(0,1)^{n-1}\cap\BZ^{n-1}$.
Among those $k'$, whether $\Gamma$ and $T_\e^k$ intersect or not is determined by the $x_n$ - coordinate of $\Gamma$ at
$x'=\e k'$.
In fact, it is necessary that
\[
\e \left(\alpha\cdot k' -k_n \right) \in (c_\e, d_\e) = (a_\e c, a_\e d),\quad \text{for some }k_n\in\BZ,
\]
where $-\frac{1}{2} <c < d< \frac{1}{2}$
as indicated in Figure 3.
Note that for each $k'\in\e^{-1}(0,1)^{n-1}\cap\BZ^{n-1}$ there is a unique $k_n\in\BZ$ such that
$\alpha\cdot k'-k_n\in(-1/2,1/2]$, or equivalently, $\alpha\cdot k'/\BZ\in(-1/2,1/2]/\BZ$.

\begin{figure} \label{fig-3}
\includegraphics[width=80mm]{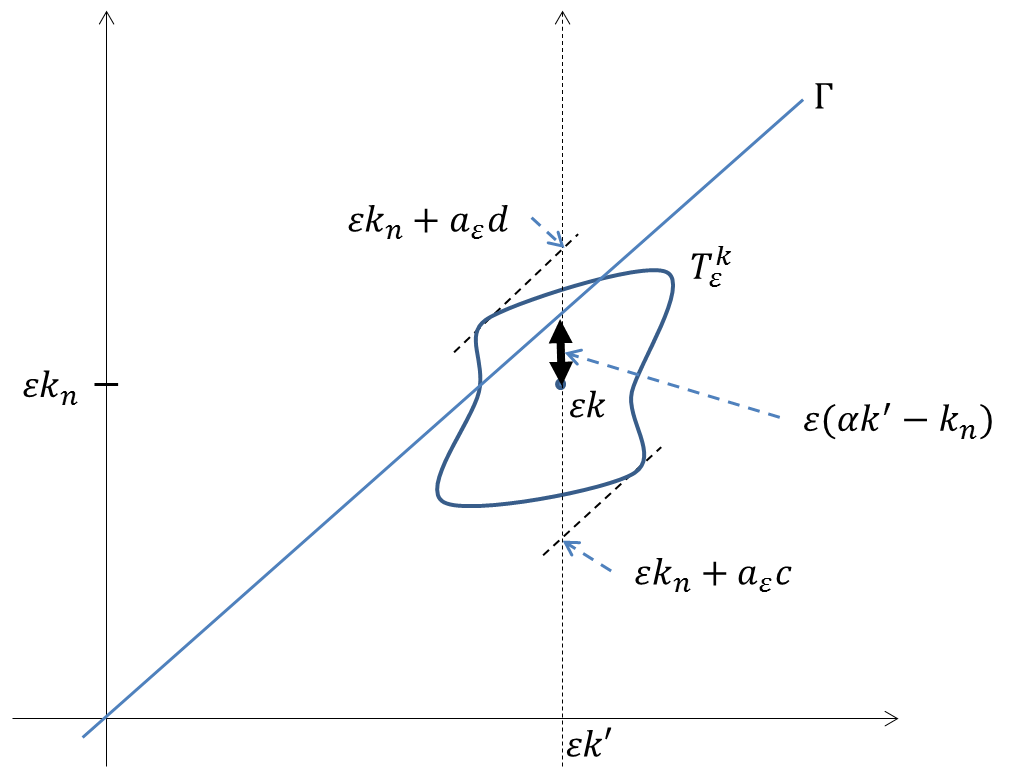}
\caption{}
\end{figure}

Actually, $\Gamma$ will intersect some $T_\e^k$ above $\e k'$ if and only if
\begin{equation}\label{eq-o1}
\alpha\cdot k'/\BZ\in (c_\e/\e,d_\e/\e)/\BZ.
\end{equation}
Hence
\begin{equation}\label{A_e}
A_\e= \#\{\alpha\cdot k'/\BZ\in (c_\e/\e,d_\e/\e)/\BZ:k'\in\cZ_\e\}.
\end{equation}
In equation \eqref{eq-o1} 
$k'$ ranges over the set
\[
\cZ_\e:=\{k'\in \BZ^{n-1}:\e k'\in (0,1)^{n-1}\},
\]
which contains $\e^{1-n}+o(\e^{2-n})$ points.
We see that the distribution (mod 1) of the sequence
\[
\{\alpha\cdot k'\}_{k'\in\cZ_\e}
\]
will determine the number of intersections. This distribution depends strongly on the arithmetic properties of
the components of $\alpha$, and thus of the normal $\nu$. However, we prove later in Section 2 that for a.e. $\nu\in S^{n-1}$
the sequence is rather "well" distributed. By this we mean that the fraction of points in the sequence $\{\alpha\cdot k'\}_{k'\in\cZ_\e}$
that intersect $\Gamma_\e$ equals the fraction that the interval $(c_\e,d_\e)$ occupies in the $\e$ cube, with some small error.
This is true as long as $d_\e-c_\e$ is not "too" small. That is, if we define
\begin{equation}\label{eq-o3}
N_\e=\#\cZ_\e\approx \e^{1-n},
\end{equation}
then
\begin{equation}\label{eq-o4}
\frac{A_\e}{N_\e}=\frac{d_\e-c_\e}{\e}+error,\quad error=o\left(\frac{d_\e-c_\e}{\e}\right)=o\left(\frac{a_\e}{\e}\right),
\end{equation}
provided $d_\e-c_\e$ is not too small. The error in \eqref{eq-o4} can be estimated by the discrepancy
(Definition~\ref{def_discrepancy}) of the sequence
\[
\{\alpha\cdot k':k'\in\cZ_\e\}.
\]
The smallest distance $h_\e$ in the normal direction between two parallel translations of $\Gamma$
that bound all intersections, see Figure 3, is related to $d_\e-c_\e$ as
\[
(d_\e-c_\e)e_n\cdot\nu=h_\e\Longleftrightarrow (d_\e-c_\e)=h_\e/\nu_n,
\]
and $h_\e=O(a_\e)$.
Using this in equation \eqref{eq-o3} and \eqref{eq-o4}
gives
\begin{equation}\label{eq-o5}
A_\e = O\left( N_\e\frac{a_\e}{\e}\right).
\end{equation}
Plugging this into \eqref{eq-o0} and using \eqref{eq-o3} yields, for $n\ge 3$,
\[
\int_\Omega|\nabla w_\e|^2dx\le CA_\e a_\e^{n-2}=C\e^{1-n}\frac{a_\e}{\e}a_\e^{n-2}=\frac{a_\e^{n-1}}{\e^n}.
\]
Also, a smaller fraction of the intersections $\gamma_\e^k$ will satisfy $\cpt(\gamma_\e^k)\ge ca_\e^{n-2}$, so we get a lower bound
\[
c\frac{a_\e^{n-1}}{\e^n}\le \int_\Omega|\nabla w_\e|^2dx.
\]
Thus, the choice
\begin{equation}
a_\e=\e^{\frac{n}{n-1}},\quad n\ge3,
\end{equation}
gives uniform lower and upper bounds on the energy of the correctors.

If $n=2$, the same argument as above, replacing $a_\e^{n-2}$ by $(-\log a_\e)^{-1}$ and recognizing that $N_\e=\e^{-1}$,
gives the condition
\begin{equation}\label{}
\lim_{\e\to0}\frac{-\e^{-3}a_\e}{-\log a_\e}=\text{constant},
\end{equation}
and this is true when
\begin{equation}\label{}
a_\e=-\e^{3}\log\e.
\end{equation}
However, in this case $d_\e-c_\e=O(a_\e)$ in \eqref{eq-o4} is too small, and this is why Theorem~\ref{thm-main}
is not valid in dimension $n=2$. Indeed, if the error in \eqref{eq-o4} is estimated by discrepancy
we get, using \eqref{ud:5},
\[
error \le \e^s,\;\text{ for any }s\in(0,1)\text{ and a.e. normal }\nu\in S^1,
\]
but this is not nearly enough since $a_\e/\e=-\e^2\log\e$ is much smaller.

The remaining properties in lemma \ref{lem-cor-main} will be proven in Section~\ref{sec-cor} on correctors.

\section{Correctors}\label{sec-cor}

We are going to construct the correctors $w_\e$ by determining the restriction of $w_\e$
to each cell $Q_\e(\e k)\subset\Omega$, $k\in\Z^n$.
Let
\begin{equation}\label{constr1}
\cC_\e^k=\left\{v\ge 1\text{ on } \gamma_\e^k, \;
v\in H_0^1(B_{\e/2})\right\},
\end{equation}
for any $\gamma_\e^k=\Gamma\cap T^k_\e$ such that $\gamma_\e^k \neq \emptyset$.
The solution $w_\e^k$ of equation \eqref{eq-cor-loc} can be characterized as follows:
\begin{equation}
\left\{
\begin{aligned}
\label{cor-4}
& w_\e^k\in \cC_\e^k,\\
&\int_{B_{\e/2}(\e k)}|\nabla w^k_\e|^2dx
=\inf\left\{\int_{B_{\e/2}(\e k)}|\nabla v|^2dx:\;v\in\cC_\e^k\right\}.
\end{aligned}
\right.
\end{equation}
This problem has a unique solution satisfying
$\Delta w_\e^k=0$ in $B_{\e/2}(\e k)\setminus\gamma_\e^k$ and $w_\e^k= 1$ on $\gamma_\e^k$,
and
\begin{equation}\label{cor-5}
\frac{\int_{B_{\e/2}(\e k)}|\nabla w^k_\e|^2dx}{\cpt(\gamma_\e^k)}\to 1,\;\e\to0.
\end{equation}
To see this we make a scaling and a translation
$x\mapsto a_\e x+\e k$, $\tilde w^k_\e(x)=w^k_\e(a_\e x+\e k)$.
Then \eqref{cor-5} becomes
\begin{equation}
\frac{\int_{B_{\e/2a_\e}}|\nabla \tilde w^k_\e|^2dx}{\cpt(\tilde \gamma^k)},
\end{equation}
where $\tilde \gamma^k=a_\e^{-1}(\gamma_\e^k-\e k)$ is independent of $\e$ and $\tilde w_\e^k$ satisfies
\begin{equation}
\label{cor-6}
\begin{aligned}
&\int_{B_{\e/2a_\e}}|\nabla \tilde w^k_\e|^2dx \\&=\inf\left\{\int_{B_{\e/2a_\e}}|\nabla v|^2dx:\;
v \in H_0^1(B_{\e/2a_\e})\text{ and } v\ge 1\text{ on } \tilde\gamma^k  \right\},
\end{aligned}
\end{equation}
which converges to $\cpt(\tilde \gamma^k)$.

We proceed with some proporties of averaged capacity, described in Definition
~\ref{def-avcap}, and its relation to the correctors. First we would like to point out that under the assumption 
that $T\subset\subset B_1$ has Lipschitz boundary, it is easy to check that the function $f(s)=\cpt(T\cap(\Gamma+s\nu))$ 
is continuous. 
Next we compute the averaged capacity for a ball.
\begin{example}
When $T$ is a ball we can compute the averaged capacity explicitly.
Say $T=B_r$. It is clear that $\cpt_\nu(B_r)$ is independent of $\nu$ so we assume $\nu=x_n$ and $\Gamma=\{x_n=0\}$.
Then
\[
\cpt_\nu(B_r)=\int_{-\infty}^\infty\cpt(B_r\cap(\Gamma+sx_n))ds.
\]
If $0\le s\le r$,
\[
B_r\cap(\Gamma+sx_n)=B_{\rho(s)}'+sx_n,\quad \rho(s)=\sqrt{r^2-s^2},
\]
and $\cpt(B_{\rho(s)}'+sx_n)=\cpt(B_{\rho(s)}')$ from the translation invariance of capacity.
We recall from Maz'ya, \cite{MR817985}, the capacity of the $(n-1)$- dimensional ball $B_\rho'$ with respect to $\R^n$, $n\ge 3$:
\[
\cpt(B_\rho')=\frac{\omega_n}{c_n}\rho^{n-2},
\]
where $\omega_n$ is the surface measure of the unit sphere in $\R^n$ and
\[
\left\{
\begin{array}{l}
c_3=\displaystyle\frac{\pi}{2},\quad c_4=1,\\
c_n=\displaystyle\frac{(n-4)!!}{(n-3)!!},\quad\text{if }n\ge 5\text{ is odd},\\
c_n=\displaystyle\frac{\pi(n-4)!!}{2(n-3)!!},\quad\text{if }n\ge 6\text{ is even}.
\end{array}\right.
\]
Thus,
\[
\cpt_\nu(B_r)=2\int_0^r\frac{\omega_n}{c_n}(r^2-s^2)^{\frac{n-2}{2}}ds=
2\frac{\omega_n}{c_n}r^{n-1}\int_0^1(1-\tilde{s}^2)^{\frac{n-2}{2}}d\tilde s\quad(\tilde{s}=rs).
\]
When $n=3$, this becomes, setting $\tilde{s}=\sin\tau$,
\[
\cpt_\nu(B_r)=2\frac{\omega_3}{c_3}r^{2}\int_0^{\pi/2}\cos^2\tau d\tau=2\frac{\omega_3}{c_3}r^{2}\frac{\pi}{4}
=\frac{\omega_3}{c_3}\frac{\pi}{2}r^{2}=\frac{4\pi/3}{\pi/2}\frac{\pi}{2}r^{2}=\frac{4\pi}{3}r^2.
\]

\end{example}

\begin{remark}\label{ex-cor}
If $T_\e=a_\e T$, $a_\e = \e^{\frac{n}{n-1}}$, then
\[
\e^{-n}\cpt_\nu(T_\e)=\cpt_\nu(T).
\]
This follows from the scaling properties of the capacity:
\[
f_\e(s):=\cpt(a_\e T\cap\Gamma_\nu(s)) = (a_\e)^{n-2}\cpt(T,\Gamma_\nu(s/a_\e))=(a_\e)^{n-2}f(s/a_\e).
\]
Thus
\[
\int f_\e(s)dt = (a_\e)^{n-2}\int f(s/a_\e)dt = (a_\e)^{n-1}\int f(s)ds = \e^n\int f(s)ds.
\]
\end{remark}
\begin{remark}\label{rem-cor}
If we set
\[
g_\e(s):=\int_{B_{\e/2}}|\nabla w^s_\e|^2dx,
\]
where $w^s_\e$ solves \eqref{cor-4} with  $\gamma^s_\e = T_\e \cap \Gamma_\nu( s)$,
then it can be concluded in the same way that
\[
g_\e(s)=(a_\e)^{n-2}G_\e(s/a_\e),
\]
where
\[
G_\e(s):=\int_{B_{\e/2a_\e}}|\nabla \tilde w^s_\e|^2dx,
\]
and $\tilde w^s_\e$ solves \eqref{cor-6} with $\gamma^s=T\cap\Gamma_\nu(s)$.
Moreover,
\[
\lim_{\e\to0}G_\e(s)=f(s),
\]
and according to the next lemma the convergence is uniform in $s$.
\end{remark}

\begin{lemma}
Let $\gamma \subset B_1(0)$ be any compact set. Then,
\begin{equation*}
\cpt(\gamma,B_R) := \int_{B_R} |\nabla W_R|^2 dx \ra \cpt(\gamma)
\end{equation*}
uniformly w.r.t. $\gamma$ where $W_R$ is the function satisfying
\begin{equation*}
\int_{B_R} |\nabla W_R|^2 dx  = \inf_{v \in K_R} \int_{B_R} |\nabla v|^2 dx, \quad K_R = \{ v \in H^1_0(B_R) ; v=1 \text{ on } \gamma \}.
\end{equation*}
\end{lemma}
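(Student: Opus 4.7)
The plan is to sandwich $\cpt(\gamma, B_R)$ between $\cpt(\gamma)$ from below and $\cpt(\gamma) + o(1)$ from above, with the $o(1)$ term decaying to zero as $R \to \infty$ at a rate independent of $\gamma \subset B_1$.

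For the lower bound I would extend the minimizer $W_R \in H_0^1(B_R)$ by zero to all of $\R^n$. Since $W_R = 1$ on $\gamma$, the zero extension lies in $H^1(\R^n)$ and is still admissible for the variational definition of $\cpt(\gamma)$ (after a routine density/truncation argument to match the $C_c^\infty$ formulation of Definition \ref{capacity}), which yields $\cpt(\gamma) \le \cpt(\gamma, B_R)$.

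For the matching upper bound I would introduce the equilibrium potential $u_\gamma$, i.e.\ the unique minimizer of $\int_{\R^n}|\nabla v|^2\,dx$ over $\{v : v\ge 1 \text{ on }\gamma\}$ with the appropriate decay at infinity; this $u_\gamma$ is harmonic in $\R^n \setminus \gamma$ and realizes $\int_{\R^n}|\nabla u_\gamma|^2\,dx = \cpt(\gamma)$. Then pick a cut-off $\eta_R \in C_c^\infty(B_R)$ with $\eta_R \equiv 1$ on $B_{R/2}$ and $|\nabla \eta_R| \le C/R$. Since $\gamma \subset B_1$, for $R \ge 2$ the product $\eta_R u_\gamma$ lies in $K_R$, so
\[
\cpt(\gamma, B_R) \le \int_{B_R}|\nabla(\eta_R u_\gamma)|^2\,dx \le \cpt(\gamma) + I_R,
\]
where $I_R$ collects the tail $\int_{|x|>R/2}|\nabla u_\gamma|^2\,dx$ together with the two boundary-layer terms involving $\nabla\eta_R$ produced by expanding $|\nabla(\eta_R u_\gamma)|^2$.

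The key estimate --- and the only nontrivial step --- is to bound $I_R$ by a quantity that is $o(1)$ as $R\to\infty$ and \emph{independent of $\gamma$}. Because $\gamma \subset B_1$, the function $u_\gamma$ is harmonic outside $B_1$, bounded by $1$ on $\partial B_1$, and tends to $0$ at infinity; comparison with $|x|^{-(n-2)}$ via the maximum principle yields the pointwise bound $u_\gamma(x) \le |x|^{-(n-2)}$ for $|x|\ge 1$, and standard interior gradient estimates for harmonic functions on shells $|x|\sim R$ give $|\nabla u_\gamma(x)| \le C|x|^{-(n-1)}$, both uniform in $\gamma \subset B_1$. Inserting these pointwise bounds into each piece of $I_R$ produces a quantity of order $R^{-(n-2)}$ (here is where $n\ge 3$ enters), so $I_R \to 0$ uniformly in $\gamma$, completing the sandwich. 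The only real obstacle is justifying the uniform decay of $u_\gamma$ and $|\nabla u_\gamma|$ at infinity, but this reduces to the standard maximum-principle comparison just described.
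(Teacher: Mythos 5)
Your argument is correct, and its skeleton is the same as the paper's: the lower bound $\cpt(\gamma)\le\cpt(\gamma,B_R)$ comes from the inclusion of admissible classes, and the upper bound comes from turning the whole-space equilibrium potential $u_\gamma$ into a competitor for the $B_R$-problem, with the uniformity in $\gamma$ supplied by the maximum-principle comparison $u_\gamma\le|x|^{-(n-2)}$ outside $B_1$ (the paper's barrier $h(x)=\min\{1,|x|^{2-n}\}$). Where you differ is the truncation mechanism. You cut off in space with $\eta_R u_\gamma$, which forces you to also control $|\nabla u_\gamma|$ on the annulus $B_R\setminus B_{R/2}$; your route through interior gradient estimates for harmonic functions, giving $|\nabla u_\gamma|\le C|x|^{-(n-1)}$ uniformly in $\gamma\subset B_1$, is legitimate and yields the same $O(R^{-(n-2)})$ error. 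The paper instead truncates in the range, taking $v_R=\max\{0,(u_\gamma-M_R)/(1-M_R)\}$ with $M_R=R^{2-n}$: since $u_\gamma\le M_R$ outside $B_R$, this lies in $K_R$, and its energy is at most $(1-M_R)^{-2}\cpt(\gamma)$ by the chain rule for the positive part. That version needs no gradient estimate at all and no splitting of $I_R$ into tail and boundary-layer pieces, so it is a bit more economical; your version is the more standard cutoff argument and generalizes more readily to settings where vertical truncation is unavailable. In either case one should note, as you implicitly do, that the uniform rate also uses $\cpt(\gamma)\le\cpt(\overline{B_1})<\infty$ for all $\gamma\subset B_1$, and that $n\ge3$ is essential for the decay of the barrier.
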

\begin{proof}
Since capacity can be characterized by 
$\cpt(\gamma) = \inf_{v \in K} \int |\nabla v|^2 dx$, $ K = \{v\in H^1_0(\R^n) ; v \ge 1 \text{ on } \gamma \}$ and $K_R \subset K$,
\begin{equation} \label{cptinq_1}
\cpt (\gamma) \le \cpt(\gamma,B_R)
\end{equation}
holds from the definition for all $R >0$.

We can characterize the capacity by using the function $v \in H^1_0(\R^n)$ satisfying
\begin{equation*} \begin{cases}
\Delta v = 0 &\text{in } \R^n \setminus \gamma \\
v=1 &\text{on } \gamma \\
v=0 &\text{at infinity}.
\end{cases} \end{equation*}

From \cite{MR181436} page 27, we know that the capacity of $\gamma$ is given by
\begin{equation*}
\cpt(\gamma) = \int |Dv|^2 dx.
\end{equation*}
Let $ h(x) = \min \left\{1, \displaystyle\frac{1}{|x|^{n-2}} \right\}$.
Then, since $\Delta h =0$ in $\R^n \setminus B_1(0)$ and $h = 1$ in $B_1$,
\begin{equation*}
v \le h \text{ on } \R^n \setminus B_1
\end{equation*}
And hence we have
\begin{equation*}
v \le \displaystyle\frac{1}{R^{n-2}} =: M_R \text{ on } \R^n \setminus B_R.
\end{equation*}

Let $v_R(x) = \max\left\{0, \displaystyle\frac{v-M_R}{1-M_R}\right\}$. Then, $v_R$ is in $K_R$ and hence
\begin{equation} \label{cptinq_2} \begin{aligned}
\cpt(\gamma,B_R) &\le \int_{B_R \setminus \gamma} |\nabla v_R |^2 dx \\
&=\displaystyle\frac{1}{(1-M_R)^2} \int_{B_R \setminus \gamma} |\nabla v|^2 dx \\
&\le \displaystyle\frac{1}{(1-M_R)^2} \cpt(\gamma).
\end{aligned} \end{equation}

Finally, we get the conclusion by combining \eqref{cptinq_1} and \eqref{cptinq_2}.
\end{proof}

\begin{lemma}\label{corr.energy}
If $\nu\in S^{n-1}$ is such that $\nu^i/\nu^j\in \cA$, for at least one pair $(i,j)$, $i,j\in\{1,\ldots,n\}$,
then for any measurable subset $E$ of $\R^n$
\[
\int_E|\nabla w_\e|^2dx\to \sigma(\Gamma\cap E)\,\cpt_\nu(T).
\]
\end{lemma}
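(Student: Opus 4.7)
The plan is to decompose the total energy $\int_E |\nabla w_\e|^2$ into a sum of cellwise energies, rescale each via Remark \ref{rem-cor} to express it as $a_\e^{n-2} G_\e(s_{k'})$, and then pass the cell sum to the Riemann integral defining the averaged capacity by invoking the equidistribution result of Lemma \ref{lemma-equidist}. After relabeling coordinates (the hypothesis $\nu^i/\nu^j\in\cA$ singles out an admissible pair, and in particular $\nu_n\neq 0$), parameterize $\Gamma$ by $x_n=\alpha\cdot x'$ with $\alpha=-(\nu_1,\dots,\nu_{n-1})/\nu_n$. For each $k'\in\cZ_\e$ let $k_n=k_n(k')$ be the unique integer making $t_{k'}:=\alpha\cdot k'-k_n\in(-1/2,1/2]$ and set $k=(k',k_n)$. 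Since $a_\e=o(\e)$, these are the only cells for which $\gamma_\e^k\neq\emptyset$ can occur.

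First, because $w_\e^k$ is supported in $B_{\e/2}(\e k)$, I get the localization
\[
\int_E |\nabla w_\e|^2 dx = \sum_{\,k:\;\e k\in E}\int_{B_{\e/2}(\e k)}|\nabla w_\e^k|^2\,dx + o(1),
\]
where the $o(1)$ accounts for cells within $O(\e)$ of $\partial E$; using the cellwise bound $a_\e^{n-2}$ together with the intersection-fraction $O(a_\e/\e)$ established in Section 1.5, one checks these contribute $O(\e)$. A direct computation gives $\e k\cdot\nu=-\e\nu_n t_{k'}$, so in rescaled variables $y=(x-\e k)/a_\e$ the plane $\Gamma$ becomes $\Gamma_\nu(s_{k'})$ with $s_{k'}=\e\nu_n t_{k'}/a_\e$. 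Remark \ref{rem-cor} therefore yields
\[
\int_{B_{\e/2}(\e k)}|\nabla w_\e^k|^2\,dx = a_\e^{n-2} G_\e(s_{k'}),
\]
and the preceding lemma gives $G_\e\to f$ uniformly, where $f(s)=\cpt(T\cap\Gamma_\nu(s))$ is supported in $[-1,1]$.

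Next, I apply Lemma \ref{lemma-equidist} to the sequence $\{t_{k'}\}_{k'\in\cZ_\e\cap\e^{-1}E'}$, with $E'$ the projection of $\Gamma\cap E$ onto $\R^{n-1}$: the assumption $\nu^i/\nu^j\in\cA$ supplies equidistribution mod $1$ with vanishing discrepancy. A Koksma-type Riemann-sum estimate, combined with uniform convergence of $G_\e$, then gives
\[
\sum_{k'} G_\e(s_{k'}) = N_\e \cdot \frac{a_\e}{\e|\nu_n|}\cpt_\nu(T) + o(N_\e),\qquad N_\e\sim |E'|\,\e^{1-n},
\]
after the substitution $s=\e\nu_n t/a_\e$, noting that $f$ sits inside the expanding interval of integration. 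Multiplying by $a_\e^{n-2}$ and using $a_\e=\e^{n/(n-1)}$ produces the prefactor $a_\e^{n-1}\e^{-n}=1$, so the limit equals $|E'|\cpt_\nu(T)/|\nu_n|$. Since the surface-area element on $\Gamma$ over $E'$ is $\sqrt{1+|\alpha|^2}\,dx'=dx'/|\nu_n|$, this is precisely $\sigma(\Gamma\cap E)\cpt_\nu(T)$, proving the claim for $E$ with $|\partial E|=0$. The general measurable case then follows by outer regularity, since the measures $|\nabla w_\e|^2\,dx$ are uniformly bounded and concentrate on $\Gamma$, so that both sides depend only on $\Gamma\cap E$.

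The main obstacle is the Riemann-sum step: the integrand $G_\e$ is nonzero only on a fraction $\sim a_\e/\e$ of the period, so the discrepancy error must be small compared to $a_\e/\e=\e^{1/(n-1)}$. Lemma \ref{lemma-equidist}, which delivers discrepancy $O(\e^s)$ for every $s<1$ under the arithmetic condition $\nu^i/\nu^j\in\cA$, just barely controls this error when $n\ge 3$ — and this is exactly why the dimension restriction appears (cf. the breakdown for $n=2$ at the end of Section 1.5). A secondary subtlety is needing the uniform version of $G_\e\to f$ to commute the limit with integration over the expanding interval; this is supplied by the preceding lemma, whose uniformity is what makes the whole scheme go through.
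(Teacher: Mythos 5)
Your proposal is correct and follows essentially the same route as the paper: cell decomposition, the rescaling $\int_{B_{\e/2}(\e k)}|\nabla w_\e^k|^2dx=a_\e^{n-2}G_\e(\nu_n t(k')/a_\e)$ from Remark~\ref{rem-cor}, equidistribution of the $t(k')$ via Lemma~\ref{lemma-equidist}, uniform convergence $G_\e\to f$, and the factor $\sigma(\Gamma'_E)=\nu_n\sigma(\Gamma\cap E)$. The only cosmetic difference is that the paper executes the Riemann-sum step by partitioning $(c,d)$ into $M$ subintervals and squeezing with $\sup/\inf$ of $G_\e$ before letting $M\to\infty$, whereas you invoke a Koksma-type bound directly; both hinge on exactly the comparison you identify, namely discrepancy $o(\e^p)$ for all $p<1$ against the support length $a_\e/\e=\e^{1/(n-1)}$.
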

\begin{proof}
Without loss of generality we assume $\nu_{n-1}/\nu_{n}\in \cA$, i.e. $\alpha_{n-1}\in  \cA$,
where $\alpha\in\BR^{n-1}$ is given by \eqref{hp1}.
We may also assume $c=0$ in the representation $\Gamma=\{x\cdot\nu=c\}$, by Remark~\ref{rem-translation}.
Let
\[
\Gamma'_E= \text{proj}_{\BR^{n-1}} (E\cap\Gamma), \quad \cZ_\e=\e^{-1}\Gamma'_E\cap\BZ^{n-1}.
\]
Then
\begin{equation}\label{cor-11}
\sigma(\Gamma'_E)=\nu_n\sigma(E\cap\Gamma).
\end{equation}
Note that $(\e k + T_\e ) \cap \Gamma \neq \emptyset$ ($k=(k',k_n)$) is equivalent to
\begin{equation}\label{t}
\alpha k'/\BZ \in (c_\e/\e, d_\e/\e)/\BZ\Llra \alpha\cdot k'-k_n\in (c_\e/\e,d_\e/\e)
\end{equation}
for some constants $c_\e=a_\e c$ and $d_\e=a_\e d$ as described in Figure 3.
If \eqref{t} holds, let
\begin{equation}\label{tdef}
t=t(k')= \e (\alpha\cdot k'-k_n).
\end{equation}
Thus $t(k')=O(a_\e)$. Since
\begin{equation} \label{cor-12}
-\e k + \left(  (\e k + T_\e) \cap \Gamma  \right) = T_\e \cap \left( t(k')e_n + \Gamma \right),
\end{equation}
the shape of $(\e k + T_\e) \cap \Gamma $ is completely determined by $t=t(k')$.
Let $M$ be large positive integer and let $\delta=\displaystyle\frac{d - c}{M}$.

Define
\[
I(i) = I_M(i) = (c + (i-1) \delta, + i\delta),\quad i=1,\ldots,M,
\]
and let
\begin{equation*}
A_i(\e) = \#\{t(k')/a_\e \in I(i) : k' \in \cZ_\e \},~ i=1,\ldots, M.
\end{equation*}
Then, from lemma \ref{lemma-equidist}, we have
\begin{equation*}
A_i(\e) = (1+\rho(\e))N(\e)\frac{a_\e\delta }{\e},\quad \rho(0+)=0,
\end{equation*}
where $N(\e) = \# \cZ_\e$.

Since $\cup_{i=1}^M I(i) = (c, d)$, we have
\begin{equation*}
\int_E |\nabla w_\e |^2 dx = \sum_{i=1}^M \sum_{t(k')/a_\e \in I(i)} \int_{B_{\e/2}(\e k)} |\nabla w_\e^k |^2 dx,
\end{equation*}
where $w_\e^k$ is a solution satisfying \eqref{cor-4}. From \eqref{cor-12}, we see that the energy of $w_\e^k$ is the same as that of
$w_\e^{t \nu_n}$ in Remark~\ref{rem-cor}, since
\[
T_\e \cap \left( t(k')e_n + \Gamma \right) = T_\e \cap\left( \nu_nt(k')\nu + \Gamma \right).
\]
That is, 
\[
\int_{B_{\e/2}(\e k)} |\nabla w_\e^k |^2 dx = \int_{B_{\e/2}} |\nabla w^{t\nu_n}_\e |^2 dx=g_\e(t\nu_n).
\]
From this, we have the following:
\begin{equation*} \begin{aligned}
\int_E |\nabla w_\e |^2 dx &= \sum_{i=1}^M \sum_{t(k')/a_\e \in I(i)} \int_{B_{\e/2}(\e k)} |\nabla w_\e^k |^2 dx \\
&\le \sum_{i=1}^M A_i(\e) \sup_{t/a_\e \in I(i)} \int_{B_{\e/2}(\e k)} |\nabla w_\e^{\nu_n t} |^2 dx \\
&=\sum_{i=1}^M A_i(\e) \sup_{t/a_\e \in I(i)} g_\e(t\nu_n) \\
&\le (1+\rho(\e))N(\e)\frac{a_\e\delta}{\e} \sum_{i=1}^M \sup_{t \in I(i)} g_\e(a_\e t\nu_n) \\
&\le (1+\tilde \rho(\e))\frac{\sigma(\Gamma'_E)}{\e^{n-1}} \frac{a_\e\delta}{\e} \sum_{i=1}^M a_\e^{n-2} 
\sup_{t \in I(i)} G_\e(t\nu_n), \\
\end{aligned} \end{equation*}
where $\tilde\rho(0+)=0$.

Taking limit superior on both sides we obtain, by the uniform convergence of $G_\e$,
$$\limsup_{\e \ra 0} \int_E |\nabla w_\e |^2 dx \le
\sigma(\Gamma'_E)\frac{(d-c)}{M} \sum_{i=1}^M \sup_{t \in I(i)} f(\nu_n t).$$
Then, passing to the limit $M\to\infty$ and using \eqref{cor-11},
\begin{align*}
\limsup_{\e \ra 0} \int_E |\nabla w_\e |^2 dx& \le \sigma(\Gamma'_E) \int f(\nu_n t) dt \\
&=\frac{\sigma(\Gamma'_E)}{\nu_n} \int f(s) ds=
\sigma(\Gamma \cap E) \int f(s) ds.
\end{align*}

In a completely analogous way we find
\begin{equation*}
\liminf_{\e \ra 0} \int_E |\nabla w_\e |^2 dx \ge \sigma(\Gamma \cap E) \int f(s) ds.
\end{equation*}
Hence
\begin{equation*}
\liminf_{\e \ra 0} \int_E |\nabla w_\e |^2 dx = \sigma(\Gamma \cap E) \int f(s) ds,
\end{equation*}
as claimed.
\end{proof}

We proceed with some lemmas that are needed later for the proof of Lemma~\ref{lem-cor-main}.
Without loss of generality we may assume, by rotating the coordinates,
that $\Gamma=\{x_n=0\}$ and thus $d\sigma=dx'$.
\begin{lemma}[Compact embedding with $o(\e)$ error]\label{compact embedding}
Suppose $v_\e\weak v$ in $H_0^1(\Omega)$.
Then
\[
\frac{1}{\e}\int_0^\e\int_{\Gamma}(v_\e(x',x_n)-v(x',0))dx'dx_n\to 0.
\]
\begin{proof}
Since $\Gamma$ is part of the boundary of the set $\Omega_+=\Omega\cap \{x_n>0\}$,
$v_\e$ has a trace on $\Gamma$. That is, there is a continuous mapping
\[
H^1(\Omega)\mapsto H^{1/2}(\Gamma).
\]
For a definition of $H^{1/2}(\Gamma)$ and its properties, see \cite{MR1765047} p 51.
By the fundamental theorem of calculus, H\"older's inequality and Young's inequality:
\begin{align*}
\frac{1}{\e}\int_0^\e&\int_{\Gamma} (v_\e(x',x_n)-v_\e(x',0))dx'dx_n\\
&=\frac{1}{\e}\int_0^\e\int_{\Gamma \cap \Omega}\int_0^{x_n}\partial_{y_n}v_\e(x',y_n)dy_ndx'dx_n\\
&\le \frac{1}{\e}\int_0^\e\int_{\Gamma \cap \Omega}\left(\int_0^{x_n}dy_n\right)^{1/2}
\left(\int_0^{x_n}|v_\e(x',y_n)|^2dy_n\right)^{1/2}dx'dx_n\\
&\le \frac{1}{\e}\int_0^\e\int_{\Gamma \cap \Omega}\sqrt{x_n}\left(\int_0^{x_n}|\partial_{y_n}v_\e(x',y_n)|^2dy_n\right)^{1/2}dx'dx_n\\
&\le \frac{1}{\e}\int_0^\e \int_{\Gamma \cap \Omega} x_n^{1/4}x_n^{1/4}\left(\int_0^{x_n}|\partial_{y_n}v_\e(x',y_n)|^2dy_n\right)^{1/2}dx'dx_n\\
&\le \frac{1}{\e}\int_0^\e\left(\frac12\sqrt{x_n}\sigma(\Gamma)+\frac C 2\sqrt{x_n}\|v\|_{H_0^1(\Omega)}^2\right)dx_n\\
&\le C\sqrt\e.
\end{align*}
Additionally,
\begin{align*}
&\frac{1}{\e}\int_0^\e\int_\Gamma (v_\e(x',0)-v(x',0))dx'dx_n\\
&=\int_{\Gamma \cap \Omega}(v_\e(x',0)-v(x',0))dx'\to 0\text{ as }\e \to0,
\end{align*}
since the inclusion $H^{1/2}(\Gamma \cap \Omega)\subset L^2(\Gamma)$ is compact.
\end{proof}
\end{lemma}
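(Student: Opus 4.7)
The plan is to add and subtract $v_\e(x',0)$ inside the integrand and treat the two resulting pieces separately. Writing
\[
v_\e(x',x_n) - v(x',0) = \bigl(v_\e(x',x_n) - v_\e(x',0)\bigr) + \bigl(v_\e(x',0) - v(x',0)\bigr),
\]
the first piece measures how much $v_\e$ varies vertically in a thin slab of height $\e$ above $\Gamma$, while the second piece encodes the strong convergence of the traces, which comes for free from compactness.

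For the first piece I would use the fundamental theorem of calculus in the $x_n$ direction, writing $v_\e(x',x_n)-v_\e(x',0)=\int_0^{x_n}\partial_{y_n}v_\e(x',y_n)\,dy_n$, and then apply Cauchy--Schwarz on $[0,x_n]$ to pull out a factor of $\sqrt{x_n}$ times the $L^2$ norm of $\partial_{y_n}v_\e$ on the vertical segment. Integrating first in $x'$ (another Cauchy--Schwarz gives $\sigma(\Gamma\cap\Omega)^{1/2}$ and $\|\nabla v_\e\|_{L^2(\Omega\cap\{0<x_n<\e\})}$), then in $x_n\in(0,\e)$, and finally dividing by $\e$, the resulting bound is of order $\sqrt{\e}\,\|\nabla v_\e\|_{L^2(\Omega)}$. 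Since $v_\e\weak v$ in $H_0^1(\Omega)$ the norms $\|\nabla v_\e\|_{L^2}$ are uniformly bounded, so this piece is $O(\sqrt\e)\to0$.

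For the second piece, the integrand is independent of $x_n$, so the $\e^{-1}\int_0^\e dx_n$ disappears and we are simply left with $\int_{\Gamma\cap\Omega}(v_\e(x',0)-v(x',0))\,dx'$. Here I invoke the trace framework applied to the Lipschitz subdomain $\Omega_+=\Omega\cap\{x_n>0\}$, whose boundary contains $\Gamma\cap\Omega$. The trace operator $H^1(\Omega_+)\to H^{1/2}(\Gamma\cap\Omega)$ is continuous, and the embedding $H^{1/2}(\Gamma\cap\Omega)\hookrightarrow L^2(\Gamma\cap\Omega)$ is compact. Weak convergence $v_\e\weak v$ in $H_0^1(\Omega)$ therefore produces strong convergence of traces in $L^2(\Gamma\cap\Omega)$, which in turn makes the integral of the difference tend to zero.

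The main (and only) delicate point is to set up the trace machinery correctly, namely to view $\Gamma\cap\Omega$ as a piece of $\partial\Omega_+$ so that the standard compact trace theorem applies; once this is in place the two estimates above combine immediately. A minor subtlety is to keep track of the factor $\e^{-1}$ in the first estimate and to use Fubini to exchange the order of the $x_n$ and $y_n$ integrations, but both are routine given the uniform $H^1$ bound on $v_\e$.
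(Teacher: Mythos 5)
Your proposal is correct and follows essentially the same route as the paper: the same decomposition into a vertical-variation term (handled by the fundamental theorem of calculus plus Cauchy--Schwarz, yielding an $O(\sqrt\e)$ bound via the uniform $H^1_0$ bound) and a trace term (handled by the compactness of $H^{1/2}(\Gamma\cap\Omega)\hookrightarrow L^2(\Gamma\cap\Omega)$). The only cosmetic difference is that you use Cauchy--Schwarz in $x'$ where the paper uses Young's inequality; both yield the same estimate.
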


Next we note that there exist measures $\mu^k_\e$ and $\nu^k_\e$ such that
\begin{equation}\label{cor-10}
\Delta w^k_\e=\mu^k_\e-\nu^k_\e,\quad\supp\mu^k_\e\subset\partial B_\e(\e k),\;\supp\nu^k_\e\subset \gamma^k_\e.
\end{equation}
We define
\begin{equation}\label{cor-mu}
\mu_\e=\sum_k\mu^k_\e.
\end{equation}

\begin{lemma}\label{lem 10}
If $\nu_i/\nu_j\in \cA$ for some $i,j\in\{1,\ldots,n\}$, then
\[
\mu_\e\weak^* \cpt_\nu(T)\sigma,\quad \text{weakly star in the sense of measures}.
\]
That is,
\[
\l\mu_\e,\phi\r\to \cpt_\nu(T)\int_\Gamma\phi d\sigma,\quad\text{for all }\phi\in C_c^\infty(\Omega).
\]
\end{lemma}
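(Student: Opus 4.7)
The plan is to reduce the convergence of $\mu_\e$ to that of $\nu_\e := \sum_k \nu^k_\e$ and then apply the energy identity of Lemma \ref{corr.energy} after testing the distributional equation $\Delta w_\e = \mu_\e - \nu_\e$ against the corrector itself. For $\phi \in C_c^\infty(\Omega)$, integration by parts yields
\[
\langle \mu_\e, \phi \rangle - \langle \nu_\e, \phi \rangle = -\int_\Omega \nabla w_\e \cdot \nabla \phi \, dx.
\]
The energy bound $\|\nabla w_\e\|_{L^2}^2 \le C$ from Lemma \ref{corr.energy}, together with the pointwise estimate $w^k_\e(x) \lesssim a_\e^{n-2}|x-\e k|^{2-n}$ on each capacitary potential, gives (after summing over cells) $\|w_\e\|_{L^2(\Omega)} \to 0$ under the critical scaling $a_\e = \e^{n/(n-1)}$. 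Weak compactness in $H_0^1(\Omega)$ then forces $w_\e \weak 0$, so that $\int_\Omega \nabla w_\e \cdot \nabla \phi \, dx \to 0$ and it suffices to prove
\[
\langle \nu_\e, \phi \rangle \to \cpt_\nu(T) \int_\Gamma \phi \, d\sigma.
\]

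To this end I would take $\phi w_\e \in H_0^1(\Omega)$ as a test function in the same distributional identity. Since $w_\e = 0$ on $\supp \mu_\e$ and $w_\e \equiv 1$ on $\supp \nu_\e = \Gamma_\e$, the pairing $\langle \mu_\e - \nu_\e, \phi w_\e \rangle$ collapses to $-\langle \nu_\e, \phi \rangle$; integration by parts then gives
\[
\langle \nu_\e, \phi \rangle = \int_\Omega \nabla(\phi w_\e) \cdot \nabla w_\e \, dx = \int_\Omega \phi |\nabla w_\e|^2 \, dx + \int_\Omega w_\e \nabla \phi \cdot \nabla w_\e \, dx.
\]
The last term equals $-\tfrac12 \int_\Omega w_\e^2 \Delta \phi \, dx$ and vanishes by the $L^2$ smallness above. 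For the first, Lemma \ref{corr.energy} states $\int_E |\nabla w_\e|^2 dx \to \cpt_\nu(T) \sigma(\Gamma \cap E)$ for every measurable $E$; extending this to smooth $\phi$ by uniform step-function approximation (or by inserting $\phi$ in place of $\chi_E$ in the proof of that lemma) gives $\int_\Omega \phi |\nabla w_\e|^2 dx \to \cpt_\nu(T) \int_\Gamma \phi \, d\sigma$, which combines with the previous display to yield the lemma.

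The step I expect to be the most delicate is the strong $L^2$ decay of $w_\e$. Bounding $\int_{B_{\e/2}(\e k)} (w^k_\e)^2 dx$ via the capacitary potential comparison and summing over the $O(\e^{-n})$ cells meeting $\Omega$ produces a radial integral $\int_{a_\e}^{\e} r^{3-n} dr$ whose behaviour differs for $n=3$, $n=4$ and $n\ge 5$; in each case, however, the critical balance $a_\e = \e^{n/(n-1)}$ is exactly what forces $\|w_\e\|_{L^2(\Omega)} \to 0$. Once this smallness is in hand, the remaining manipulations are standard within the Cioranescu--Murat corrector framework.
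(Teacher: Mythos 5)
Your argument is correct, and it follows the same Cioranescu--Murat template as the paper -- both proofs ultimately convert the pairing of the measure with a test function into the Dirichlet energy of $w_\e$ and then invoke Lemma~\ref{corr.energy} -- but the execution is genuinely different. The paper works with $\mu_\e$ directly: it bounds the total mass by testing $\Delta w_\e$ against $1-w_\e$, extracts a weak$^*$ limit $\mu$ supported on $\Gamma$, and identifies $\mu(E)$ with $\lim_\e\int_E|\nabla w_\e|^2\,dx$ for measurable sets $E$ (a step that is stated somewhat loosely there, since $\int_E d\mu_\e$ and $\int_E|\nabla w_\e|^2dx$ agree only up to contributions from cells straddling $\partial E$). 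You instead reduce to $\nu_\e$ via $w_\e\weak 0$, and then obtain the \emph{exact} identity $\l\nu_\e,\phi\r=\int_\Omega\phi|\nabla w_\e|^2dx+\int_\Omega w_\e\nabla\phi\cdot\nabla w_\e\,dx$ by testing against $\phi w_\e$ (using $w_\e=0$ on $\supp\mu_\e$ and $w_\e=1$ quasi-everywhere on $\supp\nu_\e$), followed by a uniform step-function approximation to upgrade Lemma~\ref{corr.energy} from indicators to smooth weights. This buys you a cleaner bookkeeping and avoids the boundary-cell issue; what it costs is the extra (routine) approximation step and the need to justify $\phi w_\e$ as an admissible test function, which is standard since $\phi w_\e\in H_0^1\cap L^\infty$ and $\nu_\e$ does not charge sets of zero capacity.

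One remark: the step you flag as "most delicate," the strong $L^2$ decay of $w_\e$, is in fact immediate and needs none of the radial-integral case analysis. Since $0\le w_\e\le 1$ by the maximum principle and $w_\e$ is supported in the union of the balls $B_{\e/2}(\e k)$ over the $O(\e^{1-n}\cdot a_\e/\e)$ cells meeting $\Gamma_\e$, one has $\|w_\e\|_{L^2(\Omega)}^2\le|\supp w_\e|\le C a_\e\to 0$. Alternatively, once $w_\e\weak 0$ in $H_0^1(\Omega)$ is known (which the paper proves via a.e.\ convergence), the Rellich--Kondrachov theorem already gives $w_\e\to0$ strongly in $L^2(\Omega)$, and Cauchy--Schwarz disposes of the cross term $\int_\Omega w_\e\nabla\phi\cdot\nabla w_\e\,dx$ without the integration by parts against $\Delta\phi$.
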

\begin{proof}
It is clear that
\[
\int_\Omega\phi d\mu_\e\le \|\phi\|_{L^\infty}\int_\Omega d\mu_\e,\quad \phi\in C_c^\infty .
\]
From \eqref{cor-10} and the fact that $(1-w_\e)$ is zero on $\Gamma_\e=\cup_k\gamma^k_\e$
and $1$ on $\cup_k\partial B_\e(\e k)$ we get
\[
\int_\Omega d\mu_\e=\int_\Omega\Delta w_\e(1-w_\e)=\int_\Omega|\nabla w_\e|^2dx\le C.
\]
Thus $\mu_\e\weak^*\mu$ for a subsequence, where $\mu$ is a finite measure.
Since finite measures on $\R^n$ are regular, it is enough to determine
$\lim_\e\int_Ad\mu_\e$ for every open and every closed set $E\subset \Omega$.
Moreover, it is clear that $\supp\mu\subset \Gamma$.
By Lemma~\ref{corr.energy},
\[
\int_{ E} d \mu=\lim_\e\int_{E}d\mu_\e =\int_E |\nabla w_\e|^2 dx\to \cpt_\nu(T)\sigma(E\cap\Gamma),
\]
which proves the lemma.
\end{proof}

We are now in a position to prove the key lemma of the paper.
\begin{lemma} \label{lem-cor-main} \item
For a.e. $\nu\in S^{n-1}$, 
there exists a sequence of functions $w_\e$ satisfying
\begin{enumerate}
\item
$w_\e=1$ on $\Gamma_\e$.
\item
$w_\e \ra 0$ weakly in $H^1_0(\Omega)$
\item
For every sequence $z_\e\in H_0^1(\Omega)$ such that $z_\e = 0$ on $\Gamma_\e$, $\|z_\e\|_{L^\infty(\Omega)}\le C$
and $z_\e \ra z$ weakly in $H^1_0(\Omega)$
there holds
\begin{equation}\label{lapl.corr}
\lim_{\e\to0}\l\Delta w_\e, \phi z_\e \r_{H^{-1},H_0^1} \to \cpt_\nu(T)\int_\Gamma \phi zd\sigma ,
\end{equation}
for all $\phi \in \mathcal{D}(\Omega)$.
\end{enumerate}
\end{lemma}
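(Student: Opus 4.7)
The plan is to build $w_\e$ cell-by-cell, verify (1) and (2) from elementary energy and support bounds, and then attack (3) through an integration-by-parts identity together with a carefully chosen auxiliary test function.

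\textbf{Construction and items (1), (2).} Set $w_\e=\sum_k w_\e^k$, where $w_\e^k$ is the minimizer of \eqref{cor-4} extended by zero outside $B_{\e/2}(\e k)$, summed over indices $k$ with $\gamma_\e^k\neq\emptyset$. Property (1) is immediate. For (2), Lemma \ref{corr.energy} applied to $E=\Omega$ yields $\|\nabla w_\e\|_{L^2(\Omega)}^2\to\sigma(\Gamma\cap\Omega)\cpt_\nu(T)$, giving the uniform $H^1_0$-bound. To upgrade to weak convergence to zero it suffices to show $\|w_\e\|_{L^2}\to 0$: by the maximum principle $0\le w_\e\le 1$, and $w_\e$ is supported in the union of the active balls $B_{\e/2}(\e k)$; by Lemma \ref{lemma-equidist} the number of active indices is $A_\e\lesssim N(\e)\,a_\e/\e$, so $\|w_\e\|_{L^2}^2\le A_\e\,\e^n\to 0$.

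\textbf{Reducing (3) to a measure pairing and handling the smooth part.} Decompose $\Delta w_\e=\mu_\e-\nu_\e$ as in \eqref{cor-10}, with $\nu_\e$ supported on $\Gamma_\e$. Since $z_\e$ has vanishing trace on $\Gamma_\e$, integration by parts gives
\[
\langle\Delta w_\e,\phi z_\e\rangle_{H^{-1},H^1_0}=-\int_\Omega\nabla w_\e\cdot\nabla(\phi z_\e)\,dx=\int_\Omega\phi z_\e\,d\mu_\e,
\]
so the task reduces to $\int\phi z_\e\,d\mu_\e\to\cpt_\nu(T)\int_\Gamma\phi z\,d\sigma$. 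I first treat $\int\phi z\,d\mu_\e$ for $z\in C^\infty_c(\Omega)$. Introduce the auxiliary function $z'_\e:=(1-w_\e)z\in H^1_0(\Omega)$, which vanishes on $\Gamma_\e$ (where $w_\e=1$); and since $w_\e=0$ on $\supp\mu_\e\subset\bigcup_k\partial B_{\e/2}(\e k)$, one has the exact identity $\int\phi z\,d\mu_\e=\int\phi z'_\e\,d\mu_\e$. Applying the same integration-by-parts identity to $z'_\e$ and expanding $\nabla z'_\e=(1-w_\e)\nabla z-z\nabla w_\e$,
\[
\int\phi z'_\e\,d\mu_\e=\int\phi z\,|\nabla w_\e|^2\,dx-\int\phi(1-w_\e)\nabla w_\e\cdot\nabla z\,dx-\int z'_\e\,\nabla w_\e\cdot\nabla\phi\,dx.
\]
The first integral converges to $\cpt_\nu(T)\int_\Gamma\phi z\,d\sigma$ via Lemma \ref{corr.energy} and a step-function approximation of $\phi z$; the second vanishes by $\nabla w_\e\weak 0$ in $L^2$ combined with strong $L^2$-convergence $(1-w_\e)\nabla z\to\nabla z$ (dominated convergence, using $w_\e\to 0$ a.e.); the third vanishes by $|\supp\nabla w_\e|\to 0$ and the $L^\infty$-bound on $z'_\e$. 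A standard density argument extends the conclusion to $z\in H^1_0\cap L^\infty$.

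\textbf{The main obstacle.} It remains to prove $\int\phi(z_\e-z)\,d\mu_\e\to 0$, and I expect \emph{this} to be the hardest step: $z_\e-z\weak 0$ only weakly in $H^1_0$, while $\mu_\e$ has uniformly bounded mass concentrated on thin spheres, and a naive cell-wise trace estimate produces a divergent factor of order $a_\e^{n-2}/\e^n$. My plan is to write $z_\e-z=u_\e-w_\e z$ with $u_\e:=z_\e-z'_\e$: the contribution of $w_\e z$ is identically zero against $\mu_\e$ (as $w_\e=0$ on $\supp\mu_\e$), while $u_\e\weak 0$ in $H^1_0$ with $u_\e=0$ on $\Gamma_\e$. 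For $\int\phi u_\e\,d\mu_\e$ I apply the integration-by-parts identity once more and invoke Lemma \ref{compact embedding}: every ball carrying $\mu_\e^k$ lies inside an $O(\e)$-tube of $\Gamma$, so the cell-wise integrals $\int u_\e\,d\mu_\e^k$ are comparable, modulo a gradient cross-term controlled by $\|\nabla w_\e\|_{L^2}$ and $|\supp\nabla w_\e|^{1/2}$, to a tubular average of $u_\e$ of the form $\frac{1}{\e}\int_0^\e\int_\Gamma u_\e\,dx'dx_n$, which tends to zero by Lemma \ref{compact embedding}.
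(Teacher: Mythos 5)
Your construction and items (1)--(2) are fine, and your treatment of the ``smooth part'' $\int\phi z\,d\mu_\e$ via the identity $\int\phi z\,d\mu_\e=\int\phi(1-w_\e)z\,d\mu_\e$ and the expansion into $\int\phi z|\nabla w_\e|^2dx$ plus vanishing cross-terms is a clean re-derivation of Lemma~\ref{lem 10}. The gap is in the step you yourself flag as the main obstacle, $\int\phi u_\e\,d\mu_\e\to0$ with $u_\e\weak 0$, $u_\e=0$ on $\Gamma_\e$: the reduction to ``a tubular average of $u_\e$'' does not work. After replacing each $\mu_\e^k$ by a volume density on $B_{\e/2}(\e k)$ (which one can do, with an admissible error, via an auxiliary potential), what you obtain is a weighted average over the \emph{active} cells only, with density of order $a_\e^{n-2}/\e^{n}=\e^{-n/(n-1)}$ there. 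The active cells occupy only a fraction $\sim a_\e/\e=\e^{1/(n-1)}$ of the $\e$-tube around $\Gamma$, so this measure exceeds the uniform tube density $1/\e$ by the factor $\e^{-1/(n-1)}\to\infty$. Consequently Lemma~\ref{compact embedding}, which only says that the \emph{uniform} tubular average $\frac1\e\int_0^\e\int_\Gamma u_\e$ tends to zero (with no rate), gives no control: $u_\e$ could a priori be of order one precisely on the sparse set of active cells while its tube average still vanishes. This is the same divergence, by the same factor $\e^{-1/(n-1)}$, that you correctly noted kills the naive cell-wise trace estimate; moving to a tubular average does not remove it. (Relatedly, the cross-term $\int\phi\,\nabla w_\e\cdot\nabla u_\e\,dx$ produced by integrating by parts is only \emph{bounded} by $\|\nabla w_\e\|_{L^2}\|\nabla u_\e\|_{L^2}$; the factor $|\supp\nabla w_\e|^{1/2}$ helps only against $L^\infty$ quantities, not against $\nabla u_\e\in L^2$.)

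What is missing is a quantitative statement that $\mu_\e$ does not charge small subsets of the tube too much, together with an argument that upgrades the $L^1$-convergence of Lemma~\ref{compact embedding} to something usable on sparse sets. The paper does this by (i) rescaling the tube to $S=(\Gamma\cap\Omega)\times(-1/2,1/2)$, extracting a.e.\ convergence of $(z_\e\phi)(x',\e x_n)$ from Lemma~\ref{compact embedding}, and applying Egoroff to get uniform convergence off a set $S_\delta$ of measure $<\delta$; and (ii) proving the barrier estimate $-\partial w_\e/\partial n\le C\e^{1-n}a_\e^{n-2}$ on $\partial B_{\e/2}(\e k)$ and combining it with the \emph{local} equidistribution count of Lemma~\ref{lemma-equidist} (applied to small cubes $Q_i'$) to obtain $\mu_\e(Q_i^\e)\le C|Q_i|$, hence $\mu_\e$ of the exceptional set is $O(\delta)$. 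Neither ingredient appears in your sketch, and without something equivalent the limit \eqref{lapl.corr} is not established. A secondary remark: the ``standard density argument'' extending the smooth-$z$ case to $z\in H_0^1\cap L^\infty$ also requires a uniform-in-$\e$ bound of the form $\int|v|\,d\mu_\e\le C\|v\|_{H^1_0(\Omega)}$ (or the Lusin-plus-barrier route the paper takes); it is not free, since $\mu_\e$ concentrates on a set of vanishing Lebesgue measure.
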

\begin{proof}
By Lemma~\ref{aenormal} there exists a pair $(i,j)$, $i\neq j$ such that $\nu_i/\nu_j\in\cA$ for a.e. $\nu\in S^{n-1}$. 
The first property (1) is clear from the construction of $w_\e$, see \eqref{the_corrector}-\eqref{eq-cor-loc}.
By Lemma~\ref{corr.energy}, $w_\e$ is uniformly bounded in $H_0^1(\Omega)$.
Thus we can choose a subsequence $w_{\e_j}$ which converges to some $w_0$ weakly in $H_0^1(\Omega)$.
Since $w_{\e_j}$ converges $w_0$ strongly in
$L^2(\Omega)$, we can select a subsequence of $w_{\e_j}$ which converges to $w_0$ almost everywhere. But, $w_\e$ converges to $0$
except on $\Gamma$ and hence $w_0=0$.
Now, we will show $(3)$.
By applying Lemma~\ref{compact embedding} to the function $v_\e=z_\e\phi$, we see that
\begin{align*}
&\frac{1}{\e}\int_{-\e/2}^{\e/2}\int_{\Gamma \cap \Omega}|(z_\e\phi)(x',x_n)-(z\phi)(x',0)|dx'dx_n\\
&=\int_{-1/2}^{1/2}\int_{\Gamma \cap \Omega}|(z_\e \phi)(x',\e x_n)-(z\phi)(x',0)|dx'dx_n\to 0
\end{align*}
and thus
\[
(z_\e\phi)(x',\e x_n)=:v_\e(x',x_n)\to v(x',x_n):=(z\phi)(x',0),
\]
a.e. on $S:=(\Gamma \cap \Omega) \times (-1/2,1/2)$.
By Egoroff's theorem we can assert the existence of a set $S_\delta$
such that
\[
v_\e\to v \text{ uniformly on }S_\delta,\quad |S\setminus S_\delta|<\delta,
\]
for any $\delta>0$.
Upon rescaling we find:
\begin{align*}
&\text{There exists }\e_0>0\text{ such that }\\
&|(z_\e\phi)(x',x_n)- (z\phi)(x',0)|<\delta \text{ on }S_\delta^\e,\quad
|S^\e\setminus S_\delta^\e|<\e\delta, \quad\text{for all }\e<\e_0,
\end{align*}
where, for any set $E\in\R^n$, $E^\e=\{(x',\e x_n):(x',x_n)\in E\}$.
Note that since $\Delta w_\e=\mu_\e-\nu_\e$ with $\supp\nu_\e\subset \Gamma_\e$ and $z_\e=0$ on $\Gamma_\e$,
\[
\int_\Omega\Delta w_\e\phi z_\e dx=\int_\Omega\phi z_\e d\mu_\e.
\]
This allows us to compute
\begin{equation}\label{eq:33}
\left|\int_\Omega(z_\e\phi-z\phi)d\mu_\e\right|\le \int_{S_\delta^\e}
|z_\e\phi-z\phi|d\mu_\e + 2\|z\phi\|_{L^\infty}\int_{S^\e \setminus S_\delta^\e}
d\mu_\e.
\end{equation}

According to Lemma~\ref{lem 10} the first integral on the right hand side of \eqref{eq:33} is bounded by
\begin{equation}\label{eq:34}
\delta\int_\Omega d\mu_\e\to \delta \cpt_\nu(T) \sigma(\Gamma).
\end{equation}

For the other term, we may cover $S\setminus S_\delta $ by a countable union of
cubes $Q_i$ such that $\sum_i|Q_i|<2\delta$ and, say, $|Q_i|=\eta_i^n$.
Let $Q_i'=\{x':(x',x_n)\in Q_i\}$ and note
that $Q_i=Q_i'\times (x_n^0,x_n^0+\eta_i)$ and $Q_i^\e=Q_i'\times (\e
x_n^0,\e x_n^0+\e \eta_i)$, for some $x_n^0$. To estimate the second
integral in \eqref{eq:33} it is convenient to construct a barrier for
$\mu_\e$. Let $B_r\supset T$ and let $\overline{w}_\e$ satisfy
\[
\left\{\begin{array}{l}
\overline{w}_\e=1\quad\text{on }B_{ra_\e}\\
\overline{w}_\e=0\quad\text{on }\partial B_{\e/2}\\
\Delta \overline{w}_\e=0 \quad\text{in }B_{\e/2} \setminus B_{ra_\e}
\end{array}\right.
\]
(Note that $B_{ra_\e}\subset B_{\e/2}$ if $\e$ is small enough). Then
since $B_{ra_\e}\supset T_\e$, the maximum principle tells us that
$\overline{w}_\e\ge w_\e$ in $B_{\e/2}$. This implies that
\[
0\le -\frac{\partial w_\e}{\partial n}|_{\partial B_{\e/2}}\le
-\frac{\partial\overline{w}_\e}{\partial n}|_{\partial B_{\e/2}}\le
C\e^{1-n}(a_\e)^{n-2}.
\]
It follows that
\[
\int_Ad\mu_\e\le C\sum_{k}\int_{\partial B_{\e/2}(\e k)\cap
A}\e^{1-n}(a_\e)^{n-2}dS,
\]
for any measurable $A\subset \Omega$.
In particular,
\begin{equation}
\begin{aligned}\label{cube-est}
\int_{Q_i^\e}d\mu_\e&\le C\sum_k\int_{\partial B_{\e/2}(\e k)\cap
Q_i^\e}\e^{1-n}(a_\e)^{n-2}dS\\
&\le C'\eta_i\sum_k\int_{\partial
B_{\e/2}(\e k)\cap Q_i'\times(-\e/2,\e/2)}\e^{1-n}(a_\e)^{n-2}dS,
\end{aligned}
\end{equation}
where
\[
\int_{\partial B_{\e/2}(\e k)\cap
Q_i'\times(-\e/2,\e/2)}\e^{1-n}(a_\e)^{n-2}dS \le C\eta_i a_\e^{n-2}.
\]
This follows from the fact the $x_n$ axis is scaled by
$\eta_i$:
\begin{align*}
\text{Area}(\partial B_{\e/2}(\e k)\cap Q_i^\e)&=\text{Area}(\partial B_{\e/2}(\e k)\cap Q_i'\times(\e
x_n^0,\e x_n^0+\e\eta_i))\\
&\le \text{Area}(\partial B_{\e/2}(\e k)\cap Q_i'\times(-\eta_i\e/2,\eta_i\e/2))\\
&\le C\eta_i\:\text{Area}(\partial B_{\e/2}(\e k)\cap Q_i'\times(-\e/2,\e/2))\\
&\le C\eta_i \e^{n-1},
\end{align*}
where $\text{Area}(E)=\int_EdS$.
Moreover we know that, for small $\e$, the sum in \eqref{cube-est} has approximately
$\e^{-n}a_\e\eta_i^{n-1}$ terms, by Lemma~\ref{lemma-equidist}. Thus
$\int_{Q_i^\e}d\mu_\e\le C\eta_i^n=C|Q_i|$, $\e$ small. In conclusion,
\[
\int_{S^\e \setminus S_{\delta_\e}}
d\mu_\e\le C\sum_i|Q_i|\le 3C\delta.
\]
It follows that
\[
\left|\int_\Omega(z_\e\phi-z\phi)d\mu_\e\right|\le C\delta.
\]
Thus, according to \eqref{eq:33}-\eqref{eq:34}, it remains to prove
that
\[
\lim_{\e\to0}\int_\Omega z\phi(x',0) d\mu_\e
\]
exists.
Since $\phi z$ is a measurable function on $\Gamma$ there exists by
Lusin's theorem a set $\Gamma_\delta$ such that
$|\Gamma\setminus\Gamma_\delta|<\delta$ and $\phi z$ is continuous
on $\Gamma_\delta$. By extending the function $\phi z(x',0)$ and
$\Gamma $ to $\Omega$ by $\phi z(x',x_n)=\phi z(x',0)$, it follows
from Lemma~\ref{lem 10} that
\[
\lim_{\e\to0}\int_{\Gamma_\delta}\phi z
d\mu_\e=\cpt_\nu(T)\int_{\Gamma_\delta}\phi zd\sigma.
\]

Using \eqref{eq:33} and Lemma~\ref{lem 10} we obtain
\[
\lim_\e\int_\Omega z_\e\phi d\mu_\e = \cpt_\nu(T)\int_{\Gamma_\delta}z\phi d\sigma
+\lim_\e\int_{\Gamma\setminus\Gamma_\delta\times(-\e/2,\e/2)}z\phi d\mu_\e.
\]
Since the second term is $O(\delta)$ this completes the proof.
\end{proof}

\section{Proof of Theorem \ref{thm-main}}

Having established Lemma~\ref{lem-cor-main}, Theorem~\ref{thm-main} follows in a standard way.
The arguments are very similar to those in \cite{MR1493040}, and we will just indicate the necessary modifications
that have to be made. We always assume that the normal $\nu$ satisfies $\nu_i/\nu_j$ for some $i\neq j$, 
so that Lemma~\ref{lem-cor-main} may be applied. 
\begin{lemma}[l.s.c. of the energy]
\label{lem-11}
Let $z_\e\in H_0^1$ be a uniformly bounded sequence which is bounded uniformly in $\e$ satisfies $z_\e\weak z$ 
in $H_0^1$ and $z_\e=0$ on $\Gamma_\e$.
Then, we have
\[
\liminf\int_\Omega |\nabla z_\e|^2dx\ge \int_\Omega|\nabla
z|^2dx+\cpt_\nu(T)\int_{\Gamma}z^2d\sigma .
\]
\end{lemma}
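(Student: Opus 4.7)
The strategy is the standard Cioranescu--Murat oscillating-test-function argument built around the corrector $w_\e$ of Lemma~\ref{lem-cor-main}. For each $\phi\in\mathcal{D}(\Omega)$ I expand the nonnegative quantity
\[
0\le \int_\Omega |\nabla z_\e-\nabla z+\nabla(\phi w_\e)|^2\,dx
\]
into its six constituent pieces and rearrange to obtain a lower bound for $\int_\Omega|\nabla z_\e|^2\,dx$. Three of the resulting terms are easy: the weak convergence $z_\e\weak z$ gives $\int \nabla z_\e\cdot\nabla z\,dx\to \int|\nabla z|^2\,dx$, while $w_\e\weak 0$ in $H_0^1(\Omega)$ together with the compact-embedding strong convergence $w_\e\to 0$ in $L^2(\Omega)$ makes $\nabla(\phi w_\e)=w_\e\nabla\phi+\phi\nabla w_\e\weak 0$ in $L^2$, so $\int \nabla z\cdot\nabla(\phi w_\e)\,dx\to 0$.

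The substantive limits are
\[
\int_\Omega \nabla z_\e\cdot\nabla(\phi w_\e)\,dx\to -\cpt_\nu(T)\int_\Gamma \phi z\,d\sigma, \qquad \int_\Omega|\nabla(\phi w_\e)|^2\,dx\to \cpt_\nu(T)\int_\Gamma \phi^2\,d\sigma.
\]
For the first, the piece $\int w_\e\nabla\phi\cdot\nabla z_\e\,dx$ is negligible because $w_\e\nabla\phi\to 0$ strongly in $L^2$ while $\nabla z_\e$ is bounded, and for the remaining $\int \phi\,\nabla z_\e\cdot\nabla w_\e\,dx$ one integrates by parts, using $\phi z_\e\in H_0^1(\Omega)$, to obtain $-\l\Delta w_\e,\phi z_\e\r-\int z_\e\nabla\phi\cdot\nabla w_\e\,dx$; the remainder vanishes by strong/weak convergence and the duality bracket is handled by Lemma~\ref{lem-cor-main}(3) applied directly to the admissible sequence $z_\e$. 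For the second limit I rewrite $\int \phi^2|\nabla w_\e|^2\,dx=-\l\Delta w_\e,\phi^2 w_\e\r-2\int \phi w_\e\nabla\phi\cdot\nabla w_\e\,dx$; the remainder again vanishes, and the bracket is computed indirectly: Lemma~\ref{lem-cor-main}(3) is applied with test function $\phi$ and admissible sequence $\phi(1-w_\e)$ (bounded in $L^\infty$ since $0\le w_\e\le 1$, vanishing on $\Gamma_\e$ because $w_\e\equiv 1$ there, and converging weakly to $\phi$), which yields $\l\Delta w_\e,\phi^2\r-\l\Delta w_\e,\phi^2 w_\e\r\to \cpt_\nu(T)\int_\Gamma\phi^2\,d\sigma$, while the leftover piece $\l\Delta w_\e,\phi^2\r=-\int\nabla w_\e\cdot\nabla(\phi^2)\,dx\to 0$ by $\nabla w_\e\weak 0$ in $L^2$.

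Collecting everything produces
\[
\liminf_{\e\to 0}\int_\Omega|\nabla z_\e|^2\,dx\ge \int_\Omega|\nabla z|^2\,dx+\cpt_\nu(T)\Bigl(2\int_\Gamma \phi z\,d\sigma-\int_\Gamma \phi^2\,d\sigma\Bigr),
\]
and the identity $2\phi z-\phi^2=z^2-(\phi-z)^2$, combined with approximation of the trace $z|_\Gamma\in L^2(\Gamma)$ by $\phi\in\mathcal{D}(\Omega)$ (continuity of the trace $H^1(\Omega)\to L^2(\Gamma)$ plus density), makes the error term $\cpt_\nu(T)\int_\Gamma(\phi-z)^2\,d\sigma$ arbitrarily small and finishes the proof. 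The step that requires the most care is the second limit above: one cannot feed $w_\e$ itself into Lemma~\ref{lem-cor-main}(3) because $w_\e\equiv 1$ (not $0$) on $\Gamma_\e$, and the substitution $z_\e=\phi(1-w_\e)$ is the nonobvious trick that converts the quadratic energy of $\phi w_\e$ into a form to which the corrector lemma applies.
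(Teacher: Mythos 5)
Your proof is correct, and it is exactly the argument the paper intends: the paper's own ``proof'' of Lemma~\ref{lem-11} is just the citation ``identical to that of Cioranescu--Murat,'' and what you have written out is that standard oscillating-test-function expansion, correctly adapted to the present convention ($w_\e=1$ on $\Gamma_\e$, $w_\e\weak 0$), including the two genuinely delicate points --- feeding the admissible sequence $\phi(1-w_\e)$ into Lemma~\ref{lem-cor-main}(3) to evaluate $\int\phi^2|\nabla w_\e|^2\,dx$, and removing the test function $\phi$ at the end via $2\phi z-\phi^2=z^2-(\phi-z)^2$ and density. (The only alternative shortcut would be to get the limit of $\int\phi^2|\nabla w_\e|^2\,dx$ directly from Lemma~\ref{corr.energy}, but your route through the corrector lemma is equally valid.)
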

\begin{proof}
Identical to that of \cite{MR1493040}.
\end{proof}
\begin{lemma}\label{lem-12}
Let $u_\e$ be the solution of equation \eqref{eq-main-e}. Then, we have the following estimate:
\begin{equation}
\begin{aligned}
\label{limsup}
&\limsup \int_\Omega\frac12|\nabla u_\e|^2-f u_\e dx\\
&\le \inf_{v\in H_0^1,\;v\ge 0}\int_\Omega\frac12|\nabla v|^2-fvdx+
\frac12\cpt_\nu(T)\int_\Gamma((\psi-v)^+)^2 d\sigma.
\end{aligned}
\end{equation}
\end{lemma}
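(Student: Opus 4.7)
The plan is to exploit that $u_\e$ minimizes $J$ over $\cK_{\psi_\e}$: for any admissible $\tilde v_\e \in \cK_{\psi_\e}$, $J(u_\e) \le J(\tilde v_\e)$. So for each $v \in H_0^1(\Omega)$ with $v \ge 0$ I will construct a recovery sequence $\tilde v_\e \in \cK_{\psi_\e}$ with
\[
\lim_{\e\to 0} J(\tilde v_\e) = \int_\Omega \tfrac12 |\nabla v|^2 - f v\, dx + \tfrac12 \cpt_\nu(T) \int_\Gamma ((\psi - v)^+)^2 \, d\sigma,
\]
and then pass to the infimum in $v$. By truncation and density I may assume $v \in L^\infty(\Omega)$, so that $\phi := (\psi - v)^+$ belongs to $H_0^1(\Omega) \cap L^\infty(\Omega)$; note $\phi = 0$ on $\partial \Omega$ in the trace sense since $\psi \le 0$ and $v = 0$ there.

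The natural test function is $\tilde v_\e := v + \phi w_\e$, where $w_\e$ is the corrector from Lemma~\ref{lem-cor-main}. Because $w_\e = 1$ on $\Gamma_\e$ and $\phi w_\e \ge 0$, one has $\tilde v_\e = v + (\psi-v)^+ = \max(v,\psi) \ge \psi$ on $\Gamma_\e$ and $\tilde v_\e \ge v \ge 0$ elsewhere, so $\tilde v_\e \in \cK_{\psi_\e}$. Since $w_\e \weak 0$ in $H_0^1(\Omega)$ with $0 \le w_\e \le 1$ (by the maximum principle applied to \eqref{eq-cor-loc}), we have $\phi w_\e \to 0$ strongly in $L^2$ and $\nabla(\phi w_\e) \weak 0$ in $L^2$. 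Expanding,
\[
J(\tilde v_\e) = J(v) + \int_\Omega \nabla v \cdot \nabla(\phi w_\e) - f \phi w_\e \, dx + \tfrac12 \int_\Omega |\nabla(\phi w_\e)|^2 \, dx,
\]
the cross term and the $f$-term vanish in the limit, and everything reduces to proving
\[
\int_\Omega |\nabla(\phi w_\e)|^2 \, dx \to \cpt_\nu(T) \int_\Gamma \phi^2 \, d\sigma.
\]

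This identification of the energy limit is the main obstacle and is where Lemma~\ref{lem-cor-main}(3) must enter. Expanding $|\nabla(\phi w_\e)|^2 = \phi^2 |\nabla w_\e|^2 + 2\phi w_\e \nabla \phi \cdot \nabla w_\e + w_\e^2 |\nabla \phi|^2$, the third term tends to $0$ by dominated convergence ($w_\e^2 \le 1$, $w_\e \to 0$ a.e.\ along a subsequence, $|\nabla \phi|^2 \in L^1$), and the middle term vanishes by Cauchy--Schwarz combined with $\|w_\e \nabla \phi\|_{L^2} \to 0$. For the leading term I integrate by parts against $\phi^2 w_\e \in H_0^1(\Omega)$:
\[
\int_\Omega \phi^2 |\nabla w_\e|^2 \, dx = \langle -\Delta w_\e, \phi^2 w_\e \rangle - \int_\Omega w_\e \nabla w_\e \cdot \nabla \phi^2 \, dx,
\]
where the last integral is $o(1)$ since it is bounded by $2 \|\phi\|_\infty \|w_\e \nabla \phi\|_{L^2} \|\nabla w_\e\|_{L^2} \to 0$.

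It remains to compute the limit of $\langle -\Delta w_\e, \phi^2 w_\e\rangle$. Here I decompose $\phi^2 w_\e = \phi^2 - \phi \cdot z_\e$ with $z_\e := \phi(1-w_\e)$. The piece $\langle -\Delta w_\e, \phi^2 \rangle = \int \nabla w_\e \cdot \nabla \phi^2 \, dx \to 0$ by weak convergence of $\nabla w_\e$ in $L^2$ against $\nabla \phi^2 \in L^2$. For the other piece, $z_\e$ fulfils the hypotheses of Lemma~\ref{lem-cor-main}(3): it vanishes on $\Gamma_\e$, converges weakly to $\phi$ in $H_0^1(\Omega)$, and satisfies $\|z_\e\|_\infty \le \|\phi\|_\infty$. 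Approximating $\phi$ in $H_0^1(\Omega) \cap L^\infty$ by elements of $\mathcal{D}(\Omega)$ (with uniformly bounded sup-norms) and using the lemma for each smooth approximation gives
\[
\langle \Delta w_\e, \phi \cdot z_\e \rangle \to \cpt_\nu(T) \int_\Gamma \phi^2 \, d\sigma.
\]
Combining, $J(\tilde v_\e) \to J(v) + \tfrac12 \cpt_\nu(T) \int_\Gamma \phi^2 \, d\sigma$, and \eqref{limsup} follows by taking $\limsup$ in $\e$ on both sides of $J(u_\e) \le J(\tilde v_\e)$ and then infimum over the admissible $v$.
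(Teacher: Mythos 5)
Your proposal is correct and takes essentially the same route as the paper: your recovery sequence $\tilde v_\e = v + w_\e(\psi-v)^+$ is algebraically identical to the paper's $v_\e=(w_\e-1)(\psi-v)^+ +(\psi-v)^-+\psi$, and your admissibility check matches theirs. The paper then defers the energy expansion to Cioranescu--Murat, whereas you carry it out explicitly via Lemma~\ref{lem-cor-main}(3) with $z_\e=\phi(1-w_\e)$, which is the intended argument (correct modulo the routine, only sketched, density step needed because $\phi=(\psi-v)^+$ is not in $\mathcal{D}(\Omega)$).
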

\begin{proof}
Let  $v \in C_c^\infty(\Omega)$ and $v\ge 0$. Define
\[
v_\e=(w_\e-1)(\psi-v)^++(\psi-v)^-+\psi
\]
and let us prove that $v_\e\in\cK_{\psi_\e}$.
If $x\in\Gamma_\e$ and $v(x)>\psi(x)$, then $v_\e=-(\psi-v)(x)+\psi(x)=v(x)>\psi(x)$.
If $x\in\Gamma_\e$ and $v(x)\le\psi(x)$, then $v_\e(x)=\psi(x)$, since
$w_\e=1$ on $\Gamma_\e$. It remains to show that $v_\e\ge 0$ in $\Omega\setminus\Gamma_\e$.
If $\psi(x)<v(x)$,
$v_\e(x)=v(x)\ge 0$. If $\psi(x)\ge v(x)$, then
\[
v_\e(x)=w_\e(\psi-v)(x)+v(x)\ge 0.
\]
Note also that $v_\e\weak v$ weakly in $H_0^1(\Omega)$.
From the definition of $u_\e$ we have
\[
J(u_\e)\le J(v_\e).
\]
We refer to \cite{MR1493040} for the rest of the proof.
\end{proof}

\begin{lemma}\label{lem-13}
\[
\liminf \int_\Omega\frac12|\nabla u_\e|^2\ge \int_\Omega\frac12|\nabla u|^2+ \cpt_\nu(T) \int_\Gamma((\psi-u)^+)^2 d\sigma.
\]
\end{lemma}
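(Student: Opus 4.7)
The plan is to decompose $u_\e$ relative to the obstacle $\psi$ and then apply Lemma~\ref{lem-11} to the ``below-obstacle'' part of $u_\e$. Writing $u_\e=\psi+(u_\e-\psi)^+-(\psi-u_\e)^+$ and using that the two positive parts have disjoint supports, so their gradients are pointwise orthogonal, one obtains
\[
|\nabla u_\e|^2 = |\nabla\psi|^2 + 2\nabla\psi\cdot\nabla(u_\e-\psi) + |\nabla(u_\e-\psi)^+|^2 + |\nabla(\psi-u_\e)^+|^2,
\]
and the analogous identity with $u$ in place of $u_\e$ reassembles back to $|\nabla u|^2$. So passing to the $\liminf$ termwise is the strategy.

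Three of the four pieces are handled by general principles: $\int|\nabla\psi|^2$ is $\e$-independent; the bilinear cross term converges to $2\int\nabla\psi\cdot\nabla(u-\psi)$ by $u_\e\weak u$ in $H_0^1(\Omega)$; and Rellich--Kondrachov together with the $L^2$-Lipschitz property of $v\mapsto v^+$ give $(u_\e-\psi)^+\weak(u-\psi)^+$ in $H^1$, so weak lower semicontinuity of the Dirichlet seminorm yields $\liminf\int|\nabla(u_\e-\psi)^+|^2\ge\int|\nabla(u-\psi)^+|^2$. The strange-term contribution enters only through the fourth piece. I would set $\eta_\e:=(\psi-u_\e)^+$ and verify the three hypotheses of Lemma~\ref{lem-11}: the admissibility $u_\e\ge\psi$ on $\Gamma_\e$ (in trace) gives $\eta_\e=0$ on $\Gamma_\e$; since $\Gamma_\e$ has Lebesgue measure zero and $u_\e\ge 0$ off it, the uniform bound $\eta_\e\le\psi^+\in L^\infty(\Omega)$ holds; and $\eta_\e\in H_0^1(\Omega)$ follows from $\psi\le 0$ on $\partial\Omega$ together with $u_\e\in H^1_0$. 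The same Rellich argument then yields $\eta_\e\weak(\psi-u)^+$ in $H_0^1(\Omega)$, and Lemma~\ref{lem-11} produces
\[
\liminf\int|\nabla\eta_\e|^2\ge \int|\nabla(\psi-u)^+|^2 + \cpt_\nu(T)\int_\Gamma((\psi-u)^+)^2\,d\sigma.
\]

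Summing the four liminfs via the superadditivity $\liminf(a_\e+b_\e)\ge\liminf a_\e+\liminf b_\e$ and reassembling the four $u$-pieces into $\int|\nabla u|^2$ delivers the claimed inequality (the coefficient on the capacity term matches, up to an overall factor, the penalization $\tfrac12\cpt_\nu(T)\int((\psi-v)^+)^2$ in $J_\nu$ from Lemma~\ref{lem-12}). The main obstacle lies entirely in verifying the three hypotheses of Lemma~\ref{lem-11} for the candidate $\eta_\e=(\psi-u_\e)^+$, in particular the trace identity on $\Gamma_\e$ and the uniform $L^\infty$ bound arising from the obstacle admissibility $u_\e\ge 0$ a.e.; once these are in hand, the remainder is purely algebraic bookkeeping of weak limits.
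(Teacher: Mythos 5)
Your proposal is correct and follows essentially the same route as the paper: the paper's proof is precisely the decomposition $u_\e=\psi+(\psi-u_\e)^--(\psi-u_\e)^+$ followed by an application of Lemma~\ref{lem-11} to $(\psi-u_\e)^+$, which vanishes on $\Gamma_\e$ by admissibility; you have merely written out the orthogonal expansion of the gradient and the weak-limit bookkeeping that the paper leaves implicit. Your parenthetical about the overall factor is also right: the computation yields $\frac12\cpt_\nu(T)\int_\Gamma((\psi-u)^+)^2d\sigma$ after halving, which is the form actually used in the proof of Theorem~\ref{thm-main}, so the missing $\frac12$ in the lemma's displayed statement is a typo.
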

\begin{proof}
We use the identity
\[
u_\e=-(\psi-u_\e)+\psi=-(\psi-u_\e)^++(\psi-u_\e)^-+\psi.
\]
Since $u_\e \ge \psi$ on $\Gamma_\e$, $(\psi-u_\e)^+ =0$ on $\Gamma_\e$. Now consider
$\int_\Omega|\nabla u_\e|^2dx$ and apply Lemma~\ref{lem-11} on the term $(\psi-u_\e)^+$.
\end{proof}

\begin{proof}[Proof of Theorem~\ref{thm-main}] 
Let $u_\e$ be the solution of \eqref{eq-main-e} and let $v\in H_0^1(\Omega)$ be any function such that $v\ge \psi^+$ in $\Omega$.
Then $v\in \cK_{\psi_\e}$ for all $\e$ and one easily obtains a uniform bound of
$\|u_\e\|_{H_0^1(\Omega)}$ by using $v$ in \eqref{eq-main-e}.
Thus $u_\e\weak u$ in $H_0^1(\Omega)$ for a subsequence. From Lemma \ref{lem-12} and Lemma \ref{lem-13},
\begin{equation*} \begin{aligned}
&\int_\Omega\frac12|\nabla u|^2 -fudx + \cpt_\nu(T)\frac12\int_\Gamma((\psi-u)^+)^2d\sigma\\
&\le\liminf_{\e\to0}\int_\Omega\frac12|\nabla u_\e|^2 -fu_\e dx
\le\limsup_{\e\to0}\int_\Omega\frac12|\nabla u_\e|^2 -fu_\e dx\\
&\le\int_\Omega\frac12|\nabla v|^2 -fvdx +\cpt_\nu(T)\int_\Gamma((\psi-v)^+)^2d\sigma,
\end{aligned} \end{equation*}
for all $v\in H_0^1(\Omega)$, $v \ge 0$.
This proves that $u$ minimizes $J_\nu$ over $\{v\in H_0^1(\Omega): v\ge 0\}$, from which the uniqueness of the limit follows.
Thus all subsequential limits agree and this implies that the entire sequence $\{u_\e\}_{\e}$ converges to $u$, 
weakly in $H_0^1(\Omega)$.

The fact that $u$ solves \eqref{eq-main-thm} follows from standard considerations in variational inequalities. 
\end{proof}

\section{The Case of General Hyper-Surfaces}\label{sec-surface}

In this section we consider again problem \eqref{eq-main-e}, but for a more general class of surfaces than hyper-planes.
Our assumptions are that $\Gamma$ is a hyper-surface in $\R^n$ of class $C^2$ with a unit normal-field $\nu(x)$
such that $\nu_n(x)\ge \lambda>0$. Thus
\begin{equation}\label{surf1}
\Gamma\cap\Omega = \{(x',h(x')):x'\in\Gamma_\Omega'\}, \quad\|h\|_{C^2(\Gamma_\Omega')}\le C,
\end{equation}
for some $h\in C^2(\Gamma_\Omega')$. We recall that $\Gamma_\Omega'=\text{proj}_{\BR^{n-1}}\Gamma\cap\Omega$.
As before we define
\begin{equation}\label{surf2}
\alpha(x)=(\alpha_1(x),\ldots,\alpha_{n-1}(x)),\quad\alpha_i=\nu_i/\nu_n.
\end{equation}
The idea is to locally approximate $\Gamma$ by its tangent-plane in a neigbourhood of each $x\in\Gamma\cap\Omega$.
This can be compared to the theory developed in \cite{ShaLee}.
The diameter of this neigbourhood has to be small enough in order for the tangent-plane to be close to
$\Gamma$, but still large enough in order for some averaging to occur.
This leads to two necessary conditions on these neigbourhoods. For any $x_i\in\Gamma$, let $Q_{r_\e}(x_i)$
be the cube of side $r_\e$ and center $x_i$, and denote by $\pi_\e(x_i)$ the restriction to $Q_{r_\e}(x_i)$
of the tangent-plane to $\Gamma$ at $x_i$.

The first condition comes from the fact that the distance between $\Gamma$ and $\pi_\e(x_i)$ has to small enough in
order for the intersections between $\pi_\e(x_i)$ and $\cT_\e$ to be the same as those between $\Gamma$
and $\cT_\e$, up to a small error.
Since the size of the perforations are of order $a_\e=\e^{n/(n-1)}$ and the distance between $\Gamma$ and $\pi_\e(x_i)$
in $Q_{r_\e}(x_i)$ is controlled by $Cr_\e^2$, according to \eqref{surf1} and Taylor expansion, it is necessary that
$r_\e^2=o(a_\e)$. If we assume $r_\e=\e^q$, then
\begin{equation}\label{surf3}
r_\e^2=o(a_\e)\text{ if and only if }q>\frac{n}{2(n-1)}.
\end{equation}

The second condition comes from the discrepancy of the sequence
\begin{equation}\label{eqom}
\omega_{r_\e}=\omega_{r_\e}(x_i)=\{k'\cdot\alpha(x_i):k'\in \e^{-1}Q_{r_\e}'(x_i)\cap\BZ^{n-1}\}.
\end{equation}
The cardinality of the set $\e^{-1}Q_{r_\e}'(x_i)\cap\BZ^{n-1}$ is
\begin{equation}\label{surf4}
N(\e)=\left(\frac{r_\e}{\e}\right)^{n-1}=\e^{(q-1)(n-1)}.
\end{equation}
We need to determine the number
\begin{equation}\label{surf5}
A(\e^p)=\#\{k'\in \e^{-1}Q_{r_\e}'(x_i)\cap\BZ^{n-1}:k'\cdot\alpha/\BZ\in [t,t+\e^p]/\BZ\},
\end{equation}
for $p=1/(n-1)$ and any $t\in (0,1)$, compare Section~\ref{sec-unif-2}, equation \eqref{mod1perforation}.
If we assume $\alpha_{i}\in\cA$ for some $1\le i\le n-1$, then an application of step 1 in Lemma~\ref{lemma-equidist}
leads to the estimate
\begin{equation}\label{surf6}
\left|\frac{A(\e^p)}{N(\e)}-\e^p\right|\le D_\e(x_i)=o\left(\frac{1}{\e^{(q-1)s}}\right),\quad\text{for any }s\in(0,1),
\end{equation}
where $D_\e(x_i)$ is the discrepancy of the sequence $\omega_{r_\e}(x_i)$ defined in \eqref{eqom}.
Solving \eqref{surf6} for $A(\e^p)$ we find
\begin{equation}\label{surf7}
A(\e^p)=N(\e)\e^p+N(\e)o(\e^{s(1-q)}).
\end{equation}
Clearly, this information has value only if $\e^{1-q}=o(\e^{p})$, or equivalently $p<1-q$. This leads to
the condition
\begin{equation}\label{surf8}
q<\frac{n-2}{n-1}.
\end{equation}

In conclusion, we should locally approximate the hyper-surface $\Gamma$ by its tangent-plane in cubes
$Q_{\e^q}(x_i)$ where $q$ has to satisfy both \eqref{surf3} and \eqref{surf8}, i.e.
\begin{equation}\label{surf9}
\frac{n}{2(n-1)}< q<\frac{n-2}{n-1}.
\end{equation}
This is possible if and only if $n\ge 5$.

\subsection{Effective equations}

The correctors $w_\e$ constructed in Section~\ref{sec-cor} are defined in precisely the same way for the hyper-surface case.
We shall start by characterizing those surfaces for which we are able to generalize our homogenization result,
Theorem~\ref{thm-main}.
Fix $s\in(0,1)$. Let
\begin{equation}\label{surf10}
\Gamma^j=\{x\in\Gamma:D_\e(x)\le j\e^{s(1-q)}\},
\end{equation}
where $D_\e(x)$ is the discrepancy of $\omega_{r_\e}(x)$. The sequence $\omega_{r_\e}(x)$ is defined in \eqref{eqom}.
If $\alpha_i(x)\in \cA$ for some $1\le i\le n-1$, then $x\in \Gamma^j$ for large enough $j$,
by \eqref{surf6}. In fact, $\sigma$ - a.e. $x\in \Gamma\cap\Omega$ belongs to $\Gamma^j$ for large enough $j$.
This can be proved in the same way as Lemma~\ref{aenormal}.
To prove a homogenization result for $\Gamma$, we need the following hypothesis on $\Gamma$:
\begin{equation}\label{smallcap}
\lim_{j\to\infty}\cpt((\Gamma\setminus\Gamma^j)\cap\Omega)=0.
\end{equation}

\begin{lemma}\label{lem-av-j}
Let $u^j_\e$ solve \eqref{eq-main-e} with $\Gamma^j$ in place of $\Gamma$. Then as $\e\to0$,
$u^j_\e\weak u^j$ where $u^j$ is the unique minimizer of
\begin{equation}\label{functional-av-j}
J_\nu^j(v):= \int_\Omega\frac12|\nabla v|^2 -fvdx + \frac12\int_{\Gamma^j}\cpt_{\nu(x)}(T)((\psi-v)^+)^2d\sigma(x),\quad v\ge 0.
\end{equation}
In particular, $u^j$ is the solution of
\begin{equation}\label{eq-main-av-j}
-\Delta u^j = \cpt_{\nu(x)}(T)(\psi-u)^+d\sigma|_{\Gamma^j} + f\chi_{\{u^j>0\}}.
\end{equation}
That is, $\cpt_{\nu(x)}(T)$ depends in general on the point $x\in\Gamma^j$.
\end{lemma}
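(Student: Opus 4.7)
The plan is to parallel the proof of Theorem~\ref{thm-main}, with a localization step to handle the curvature of $\Gamma$ and the resulting $x$-dependence of the effective capacity. The correctors $w_\e$ will be defined exactly as in \eqref{the_corrector}--\eqref{eq-cor-loc}, now using the intersections $\gamma^k_\e=\Gamma^j\cap T^k_\e$. All the machinery of Section~\ref{sec-cor} then has to be revisited, with the constant $\cpt_\nu(T)$ replaced by the continuous function $x\mapsto \cpt_{\nu(x)}(T)$ and with integrals over $\Gamma$ replaced by integrals over $\Gamma^j$.

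The core step is the analog of the energy estimate Lemma~\ref{corr.energy}: for any measurable $E\subset\Omega$,
\[
\int_E |\nabla w_\e|^2\, dx \;\longrightarrow\; \int_{\Gamma^j\cap E} \cpt_{\nu(x)}(T)\,d\sigma(x).
\]
To prove this I would cover $\Gamma^j\cap\Omega$ by a finite-overlap collection of cubes $\{Q_{r_\e}(x_i)\}$ of side $r_\e=\e^q$ with $q$ in the range \eqref{surf9} and centers $x_i\in\Gamma^j$. In each $Q_{r_\e}(x_i)$ I replace $\Gamma$ by the tangent plane $\pi_\e(x_i)$. Two ingredients combine: (i) since $r_\e^2=o(a_\e)$, the distance between $\Gamma$ and $\pi_\e(x_i)$ in $Q_{r_\e}(x_i)$ is smaller than the perforation scale, so the set of perforations intersected by $\Gamma$ coincides with that intersected by $\pi_\e(x_i)$ up to an $o(1)$ proportion; (ii) since $x_i\in\Gamma^j$, the discrepancy bound \eqref{surf10} together with \eqref{surf7}--\eqref{surf8} allows the equidistribution step of Lemma~\ref{corr.energy} to be carried out on the local patch with constant normal $\nu(x_i)$, giving
\[
\int_{Q_{r_\e}(x_i)} |\nabla w_\e|^2\, dx = (1+o(1))\,\sigma(\Gamma\cap Q_{r_\e}(x_i))\,\cpt_{\nu(x_i)}(T).
\]
Summing over $i$ and invoking the continuity of $x\mapsto \cpt_{\nu(x)}(T)$---which comes from the $C^2$-regularity of $\Gamma$ together with the continuity of $s\mapsto f(s)$ noted before Remark~\ref{ex-cor}---a Riemann-sum argument yields the target limit.

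From the local energy estimate, the analog of Lemma~\ref{lem 10} follows by the same reasoning as in that lemma: the total mass $\int d\mu_\e$ is controlled by the corrector energy, so along a subsequence $\mu_\e\weak^*\mu$ for some finite measure $\mu$ supported on $\Gamma^j$, and the energy limit identifies $\mu$ as $\cpt_{\nu(x)}(T)\,\sigma|_{\Gamma^j}$. The barrier construction and the Egoroff/Lusin approximations inside the proof of Lemma~\ref{lem-cor-main} are purely geometric and are unaffected by the curvature of $\Gamma$, so the analog of Lemma~\ref{lem-cor-main}, with $\cpt_\nu(T)\int_\Gamma(\cdot)\, d\sigma$ replaced by $\int_{\Gamma^j}\cpt_{\nu(x)}(T)(\cdot)\, d\sigma(x)$, follows along the same lines.

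Once the corrector lemma is available, the energy-method step---Lemmas~\ref{lem-11}--\ref{lem-13} and the final identification argument in the proof of Theorem~\ref{thm-main}---applies verbatim after the same substitution, giving $u^j$ as the unique minimizer of $J^j_\nu$ and hence as the solution of \eqref{eq-main-av-j}. The main obstacle in the plan is the local energy estimate above: one must verify that the errors from (a) tangent-plane approximation, (b) local discrepancy, and (c) variation of $\nu(x)$ within $Q_{r_\e}(x_i)$ all remain $o(1)$ when summed over the cover and survive passage to the Riemann limit. This is exactly where the restriction $n\ge 5$ enters, since the simultaneous requirements $r_\e^2 = o(a_\e)$ and $\e^{1-q}=o(\e^{1/(n-1)})$ force $q$ into the range \eqref{surf9}, which is non-empty only for $n\ge 5$.
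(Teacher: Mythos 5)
Your proposal follows essentially the same route as the paper's proof: localize in cubes of side $\e^q$ with $q$ in the range \eqref{surf9}, use the tangent-plane approximation together with the uniform discrepancy bound built into the definition of $\Gamma^j$ to identify the local normalized corrector energy as $\cpt_{\nu(x_i)}(T)$, pass to the limit by a Riemann-sum argument over the cover (the paper phrases this as uniform convergence of the local energy density $F_\e$), and then run the corrector lemma and energy-method steps verbatim. Your explicit bookkeeping of the three error sources and of where $n\ge 5$ enters is a more detailed account of exactly the steps the paper's terse proof takes for granted.
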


\begin{proof}
To indicate the dependence of the correctors on $\Gamma^j$, we write
$w_{\e,j}$ for the corrector and $\mu_{\e,j}$ for the corresponding measure, given by \eqref{cor-10}-\eqref{cor-mu}.
\[
F_\e(x) = \frac{1}{\sigma(Q_{\e^q}(x)\cap\Gamma^j)}\int_{Q_{\e^q}(x)}|\nabla w_{\e,j}|^2dy
\]
converges to $F(x)=\cpt_{\nu(x)}(T)$ as $\e\to0$, by Lemma~\ref{corr.energy}.
Furthermore this convergence is uniform by definition of $\Gamma^j$.
The generalization of Lemma~\ref{compact embedding} to a hyper-surface of class $C^2$ is strainght forward.
It thus remains only to determine the weak limit of $\mu_{\e,j}$.
For any $\e>0$, we may cover $\Gamma^j$ by a finite number of disjoint sets
\[
Q_{\e^p}(x^i_\e)\cap\Gamma^j,\quad x^i_\e\in \Gamma^j.
\]
Thus, if $E\subset \R^n$, then
\begin{align*}
\mu_j(E)&=\lim_{\e\to0}\int_{E}|\nabla w_{\e,j}|^2dx=
\lim_{\e\to0}\sum_i \sigma(Q_{\e^p}(x^i_\e)\cap\Gamma^j)F_\e(x^i_\e)\\
&=\int_{\Gamma \cap E}\cpt_{\nu(x)}(T)d\sigma(x),
\end{align*}
due to the uniform convergence of $F_\e$.
The result follows exactly as in the hyper-plane case, but equation \eqref{lapl.corr} is replaced by
\begin{equation}\label{lapl.corrj}
\lim_{\e\to0}\l\Delta w_{\e,j}, \phi z_\e\r \to \int_\Gamma^j\cpt_{\nu(x)}(T) \phi zd\sigma(x).
\end{equation}
\end{proof}

\begin{thm} \label{thm-surf}
Let $u_\e$ solve \eqref{eq-main-e} and suppose $\Gamma$ satisfies assumption \eqref{smallcap}.
Then as $\e\to0$, $u_\e\weak u$ in $H_0^1(\Omega)$ where $u$ is the unique minimizer of
\begin{equation}\label{functional-av}
J_\nu(v):= \int_\Omega\frac12|\nabla v|^2 -fvdx + \frac12\int_\Gamma\cpt_{\nu(x)}(T)((\psi-v)^+)^2d\sigma(x),\quad v\ge 0.
\end{equation}
In particular, $u$ is the solution of
\begin{equation}\label{eq-main-av}
-\Delta u = \cpt_{\nu(x)}(T)(\psi-u)^+d\sigma + f\chi_{\{u>0\}}.
\end{equation}
\end{thm}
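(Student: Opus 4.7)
The plan is to deduce Theorem~\ref{thm-surf} from Lemma~\ref{lem-av-j} by an exhaustion argument along the nested family $\{\Gamma^j\}$, using \eqref{smallcap} to dispose of the missing part $\Gamma\setminus\Gamma^j$. The proof will parallel Lemmas~\ref{lem-11}--\ref{lem-13} of the hyper-plane case, with an extra capacitary correction carrying the obstacle over $\Gamma\setminus\Gamma^j$.

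\emph{Uniform bound and lower bound.} Testing \eqref{eq-main-e} with any $v\in H_0^1(\Omega)$ such that $v\ge\psi^+$ gives $\|u_\e\|_{H_0^1(\Omega)}\le C$, so a subsequence satisfies $u_\e\weak u^*$ in $H_0^1(\Omega)$. Since $(\psi-u_\e)^+=0$ on $\Gamma_\e\supset\Gamma^j_\e$, the $\Gamma^j$-analogue of Lemma~\ref{lem-11} (implicit in the proof of Lemma~\ref{lem-av-j}) applies, and the argument of Lemma~\ref{lem-13} yields
\[
\liminf_{\e\to 0}J(u_\e)\ge J_\nu^j(u^*),\qquad\text{for each fixed }j.
\]
Letting $j\to\infty$ and using the monotone convergence $J_\nu^j\nearrow J_\nu$ (valid since $\sigma(\Gamma\setminus\Gamma^j)\to 0$ and $\cpt_{\nu(x)}(T)$ is uniformly bounded in $x$), one obtains $\liminf_\e J(u_\e)\ge J_\nu(u^*)$.

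\emph{Upper bound with capacitary correction.} Let $\eta_j\in H_0^1(\Omega)$ be the capacitary potential of $(\Gamma\setminus\Gamma^j)\cap\Omega$, so that $\eta_j\ge 1$ on that set and $\|\nabla\eta_j\|_{L^2(\Omega)}^2=\cpt((\Gamma\setminus\Gamma^j)\cap\Omega)\to 0$ by~\eqref{smallcap}. For each $v\in C_c^\infty(\Omega)$ with $v\ge 0$, define
\[
v_\e=(w_{\e,j}-1)(\psi-v)^++(\psi-v)^-+\psi+\|\psi^+\|_{L^\infty}\eta_j,
\]
where $w_{\e,j}$ is the corrector for $\Gamma^j$ from Lemma~\ref{lem-av-j}. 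A direct check shows $v_\e\in\cK_{\psi_\e}$: on cells intersecting $\Gamma^j_\e$ the first three terms already reproduce $\psi$ as in Lemma~\ref{lem-12}, while on $\Gamma_\e\setminus\Gamma^j_\e$ one has $w_{\e,j}=0$ and $v_\e=v+\|\psi^+\|_{L^\infty}\eta_j\ge\psi$ thanks to $\eta_j\ge 1$ there. Hence $J(u_\e)\le J(v_\e)$; expanding and passing to the limit exactly as in Lemma~\ref{lem-12}, with the additional terms controlled by Cauchy--Schwarz,
\[
\limsup_{\e\to 0}J(u_\e)\le J_\nu^j(v)+C\sqrt{\cpt((\Gamma\setminus\Gamma^j)\cap\Omega)},
\]
and letting $j\to\infty$ gives $\limsup_\e J(u_\e)\le J_\nu(v)$ for every admissible $v$, hence for all $v\in H_0^1(\Omega)$ with $v\ge 0$ by density.

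Choosing $v=u$, the unique minimizer of $J_\nu$ over $\{v\ge 0\}$, and combining the two bounds gives $J_\nu(u)\le J_\nu(u^*)\le\lim_\e J(u_\e)=J_\nu(u)$; strict convexity of $J_\nu$ then forces $u^*=u$, so the whole sequence $u_\e\weak u$, and \eqref{eq-main-av} follows from its Euler--Lagrange equation exactly as in Theorem~\ref{thm-main}. The principal obstacle is the capacitary correction step: one has to verify both the admissibility of $v_\e$ on $\Gamma_\e\setminus\Gamma^j_\e$ (which rests on the fact that $w_{\e,j}$ is supported only near cells where $\gamma^k_\e\cap\Gamma^j\ne\emptyset$) and that the cross terms $\int\nabla(\cdot)\cdot\nabla\eta_j\,dx$ in the energy expansion vanish as $j\to\infty$ uniformly in $\e$, which follows from \eqref{smallcap} and the independence of $\eta_j$ from $\e$.
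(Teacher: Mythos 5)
Your proposal is correct, but it reaches the theorem by a genuinely different route than the paper. The paper's proof stays at the level of solutions: writing $v^j$ for the capacitary potential of $(\Gamma\setminus\Gamma^j)\cap\Omega$ and $C=\|\psi\|_{L^\infty}$, it shows by obstacle comparison and the maximum principle that $u^j_\e\le u_\e\le u^j_\e+Cv^j$ (the upper bound because $u^j_\e+Cv^j$ is an admissible supersolution dominating $\psi_\e$), then lets $\e\to0$ and $j\to\infty$ and passes to the limit in the variational characterization \eqref{anothereq} of $u^j$ from Lemma~\ref{lem-av-j}. You instead bypass the intermediate solutions $u^j_\e$ altogether and argue at the level of energies: the liminf inequality for the $\Gamma^j$-corrector applied directly to $(\psi-u_\e)^+$ (legitimate since $u_\e\ge\psi$ on $\Gamma_\e\supset\Gamma^j_\e$), and a recovery sequence for the full constraint obtained by adding $\|\psi^+\|_{L^\infty}\eta_j$ to the Lemma~\ref{lem-12} test function. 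The paper's sandwich is shorter and yields pointwise control of $u$ between $u^j$ and $u^j+Cv^j$, but it leans on a supersolution comparison that the paper states somewhat loosely (the $f$-term is suppressed in ``$-\Delta g^j_\e\ge0$''); your route trades that for a second pass through the $\liminf$/$\limsup$ machinery, with the extra terms killed by Cauchy--Schwarz and \eqref{smallcap}, and is arguably more self-contained. One small imprecision: $w_{\e,j}$ need not vanish on $\Gamma_\e\setminus\Gamma^j_\e$ (a single cell $T^k_\e$ can meet both $\Gamma^j$ and $\Gamma\setminus\Gamma^j$); admissibility of your $v_\e$ there follows instead from $0\le w_{\e,j}\le1$, which gives $(w_{\e,j}-1)(\psi-v)^++(\psi-v)^-+\psi\ge\psi-\|\psi^+\|_{L^\infty}$, together with $\eta_j\ge1$ quasi-everywhere on $\Gamma\setminus\Gamma^j$. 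With that repair the argument is complete.
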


\begin{proof}
Let $C=\|\psi\|_{L^\infty}$ and let $v^j$ be the capacity potential of $\Gamma\setminus\Gamma^j$.
Then $-\Delta v^j\ge 0$ and $v^j\ge\chi_{\Gamma\setminus\Gamma^j}$. By our assumption, \eqref{smallcap},
$v^j\weak 0$ in $H_0^1(\Omega)$. Since $\psi_\e^j=\psi\chi_{\Gamma^j_\e}\le \psi\chi_{\Gamma_\e}=\psi_\e$
we have $u^j_\e\le u_\e$. Let $g^j_\e=u^j_\e+Cv^j$. Then $g_\e^j\ge\psi_\e$ and $-\Delta g_\e^j\ge 0$ in $\Omega$.
Therefore $g^j_\e\ge u_\e$ in $\Omega$. Indeed,
otherwise $\min\{g^j_\e,u_\e\}$ would be a supersolution of \eqref{eq-main-e} that is smaller than
$u_\e$ on some set of positive capacity, contradicting the minimality of $u_\e$. Thus
\[
u^j_\e\le u_\e\le u^j_\e+v^j.
\]
Taking first $\e\to0$, then $j\to\infty$, we see that
\[
u^j\weak u.
\]
Since
\begin{align}\label{anothereq}
&\int_\Omega\frac12|\nabla u^j|^2 +fu^jdx + \frac12\int_{\Gamma^j}\cpt_{\nu(x)}(T)((\psi-u^j)^+)^2d\sigma(x)\\
&\le \int_\Omega\frac12|\nabla v|^2 +fvdx + \frac12\int_{\Gamma^j}\cpt_{\nu(x)}(T)((\psi-v)^+)^2d\sigma(x),\\
&\text{ for all }v\in H_0^1(\Omega),\; v\ge 0,
\end{align}
the conclusion follows after passing to the limit $j\to\infty$ in \eqref{anothereq},
and using the weak lower semicontinuity of the norm on $H_0^1(\Omega)$ for the term
$\int_\Omega\frac12|\nabla u^j|^2dx$.

\end{proof}

We conclude by giving an example of a hyper-surface $\Gamma$ that satisfies \eqref{smallcap}.

\begin{example} \label{ex-cylinder}
Let $\theta$ be a real number such that $\tan{\theta}\in\cA$ and set $e=(\cos{\theta}, \sin{\theta}, 0, \cdots,0)\in\BR^n$ and
$e^\perp=(\sin{\theta},-\cos{\theta})$.
Let $g$ be a smooth real valued function depending on the $n-2$ variables $x_3,\ldots, x_n$ and define the graph
\[
\mathcal{G} = \{ g(x_3,\ldots,x_n)e + (0,0,x_3,\ldots,x_n)\}.
\]
Then a hyper-surface in $\BR^n$ is constructed by
\[
\Gamma=\mathcal{G}+te^\perp,\quad t\in\BR.
\]
The normal vector $\nu(x)$ to $\Gamma$ at $x$ is always orthogonal to $e^\perp$ and thus
$\nu_2(x)/\nu_1(x)=\tan\theta\in\cA$ for all $x\in\Gamma$.
This means that $\Gamma=\Gamma^j$ when $j$ is large enough, which clearly implies \eqref{smallcap}.

\end{example}

\section{Appendix: Uniform distribution mod 1} \label{Uniform distribution}

This section contains a general discussion of uniform distribution mod 1, and builds up
the necessary theory for the present homogenization problem.
An excellent introduction to the theory of uniform distribution is the book by
Kuipers and Niederreiter, \cite{MR0419394}.
For the readers convenience we have gathered the basic theory of uniform distribution, in particular discrepancy,
here. Most of the material, except possibly Lemma~\ref{lemma-equidist}, is standard.

A fundamental problem already encountered in the introduction was that of
estimating the error in the approximation
\[
\frac{A_\e}{N_\e}\approx \frac{d_\e-c_\e}{\e},
\]
where $N_\e$ is the number of $k'\in \BZ^{n-1}$ such that
\[
(k',k'\cdot\alpha)\in \e^{-1}\Omega,
\]
and $A_\e$ the fraction of these points that intersect $\cT_\e$,
see \eqref{eq-o4}.
Thus we are led to study the distribution mod 1 of sequences of this
type. We start by considering sequences $k\alpha$ with $k\in\BZ$ and
$\alpha\in\R$. This will then be generalized to the higher
dimensional case.

\subsection{Known Result for the 1-dimensional sequence $\{k \alpha\}$} \label{sec-unif-1}

First we define the notion of uniform distribution.
\begin{definition}[Uniform distribution mod 1] \item \begin{enumerate}
\item
Let $\{x_j\}_{j=1}^\infty$ be given sequence of real numbers. For a positive integer $N$ and a subset $E$ of $[0,1]$, let the
counting function $A\left( E;\{x_j\};N \right)$ be defined as the numbers of terms $x_j$, $1 \le j \le N$, for which $x_j \in E$ (mod 1).
\item
The sequence of real numbers $\{x_j\}$ is said to be uniformly distributed modulo 1 if for every
pair $a, b$ of real numbers with $0 \le a < b \le 1$ we have
\begin{equation}
\lim_{N \ra \infty} \displaystyle\frac{A \left( [a,b);\{x_j\};N\right)}{N} = b-a.
\end{equation}
\end{enumerate} \end{definition}

Since $\int_{[0,1]} \chi_{[a,b)} dx = b-a$, we can deduce
\begin{equation}
\lim_{N \ra \infty} \frac{1}{N}\sum_{n=1}^N \chi_{[a,b)}(x_n) = \int_{[0,1]} \chi_{[a,b)} dx,
\end{equation}
where we have extended $\chi_{[a,b)}$ to a 1-periodic function in $\R$.

This simple observation and approximation technique lead to the following criterion.
\begin{thm}
The sequence $\{x_j\}_{j=1}^\infty$ is uniformly distributed mod $1$ if and only if for every real-valued continuous
function $f$ defined on the closed unit interval $I=[0,1]$, we have (upon extending $f$ periodically to $\R$)
\begin{equation}
\lim_{N \ra \infty} \frac{1}{N}\sum_{n=1}^N f(x_n) = \int_{[0,1]}f dx.
\end{equation}
\end{thm}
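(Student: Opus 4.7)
The plan is to prove both implications by approximation arguments between continuous functions and indicator functions of intervals. This is the classical Weyl equidistribution criterion, and the proof reduces to a sandwich/density argument.

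For the direction assuming uniform distribution mod $1$, first I would observe that by definition the identity
\[
\lim_{N\to\infty}\frac{1}{N}\sum_{n=1}^N \chi_{[a,b)}(x_n)=\int_{[0,1]}\chi_{[a,b)}\,dx
\]
holds for every half-open interval $[a,b)\subset[0,1]$, and therefore by linearity for every step function on $[0,1]$ (extended $1$-periodically). Given a continuous $f$ on $[0,1]$, extended periodically to $\R$, I would use uniform continuity to produce, for each $\eta>0$, two step functions $\varphi_1,\varphi_2$ satisfying $\varphi_1\le f\le \varphi_2$ and $\int_{[0,1]}(\varphi_2-\varphi_1)\,dx<\eta$. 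Then the inequalities
\[
\frac{1}{N}\sum_{n=1}^N \varphi_1(x_n)\le \frac{1}{N}\sum_{n=1}^N f(x_n)\le \frac{1}{N}\sum_{n=1}^N \varphi_2(x_n),
\]
combined with the limit for step functions and the smallness of $\int(\varphi_2-\varphi_1)\,dx$, yield $\limsup$ and $\liminf$ bounds differing by at most $\eta$ from $\int_{[0,1]}f\,dx$. Letting $\eta\to 0$ concludes this direction.

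For the converse, assume the Riemann-sum convergence for all continuous $f$. I would approximate the characteristic function $\chi_{[a,b)}$ from above and below by continuous, $1$-periodic functions $g_\eta,h_\eta$ supported near $[a,b)$, with $g_\eta\le \chi_{[a,b)}\le h_\eta$ and $\int(h_\eta - g_\eta)\,dx<\eta$ (for example, piecewise linear bumps that agree with $\chi_{[a,b)}$ away from an $\eta$-neighborhood of the endpoints $a$ and $b$). Then
\[
\frac{1}{N}\sum_{n=1}^N g_\eta(x_n)\le \frac{A([a,b);\{x_j\};N)}{N}\le \frac{1}{N}\sum_{n=1}^N h_\eta(x_n),
\]
and by hypothesis both outer sums converge to $\int g_\eta\,dx$ and $\int h_\eta\,dx$ respectively, each within $\eta$ of $b-a$. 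Passing to $\liminf$ and $\limsup$ and then sending $\eta\to 0$ yields the uniform distribution property.

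The step sitting closest to a real subtlety is the construction of the continuous approximations to $\chi_{[a,b)}$ for the converse direction, since one must respect the endpoint behaviour and the periodic extension; but this is routine and essentially the same linear interpolation used for mollifier-type approximations. No part requires additional machinery beyond uniform continuity of $f$ on the compact interval $[0,1]$ and linearity of both the Riemann integral and the averaging operator $\frac{1}{N}\sum$.
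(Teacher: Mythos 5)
Your proof is correct and is exactly the classical sandwich/approximation argument that the paper alludes to (``This simple observation and approximation technique lead to the following criterion''); the paper itself omits the proof, deferring to the standard literature (Kuipers--Niederreiter). Both directions, including your handling of the endpoint behaviour of the continuous minorants and majorants of $\chi_{[a,b)}$, are the standard and intended route.
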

Fourier expansion of the function $f$ above leads to a useful
criterion for uniform distribution.
\begin{lemma}[Weyl's criterion]
A sequence $\{x_j\}_{j=1}^\infty$ is uniformly distributed mod 1
if and only if
\[
\frac{1}{N}\sum_{j=1}^Ne^{2\pi ilx_j}\to0,\;\text{as }N\to\infty,
\]
for any nonzero $l\in\Z$.
\end{lemma}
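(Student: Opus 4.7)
The plan is to reduce Weyl's criterion to the continuous-function characterization of uniform distribution, stated just above (call it Theorem~W), and then use Fourier/Weierstrass approximation.

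For the easy direction, assume $\{x_j\}$ is uniformly distributed mod~$1$. Given a nonzero integer $l$, apply Theorem~W to the real and imaginary parts of $f(x) = e^{2\pi i l x}$, which extend to continuous $1$-periodic functions on $\R$. Since
\begin{equation*}
\int_0^1 e^{2\pi i l x}\, dx = \frac{1}{2\pi i l}\bigl(e^{2\pi i l}-1\bigr)=0,
\end{equation*}
Theorem~W yields $\frac{1}{N}\sum_{j=1}^N e^{2\pi i l x_j}\to 0$, as required.

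For the converse, suppose $\frac{1}{N}\sum_{j=1}^N e^{2\pi i l x_j}\to 0$ for every $l\in\Z\setminus\{0\}$. By linearity and the trivial fact $\frac{1}{N}\sum_{j=1}^N 1 = 1$, the corresponding conclusion
\begin{equation*}
\frac{1}{N}\sum_{j=1}^N P(x_j)\longrightarrow\int_0^1 P(x)\, dx
\end{equation*}
holds for every trigonometric polynomial $P(x) = \sum_{|l|\le L} c_l e^{2\pi i l x}$. Now let $f$ be any continuous function on $[0,1]$ extended $1$-periodically to $\R$. Given $\eta>0$, the Weierstrass approximation theorem for trigonometric polynomials produces a $P$ with $\|f-P\|_{\infty}<\eta$. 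Writing
\begin{equation*}
\Bigl|\tfrac{1}{N}\sum_{j=1}^N f(x_j)-\int_0^1 f\,dx\Bigr|
\le 2\eta + \Bigl|\tfrac{1}{N}\sum_{j=1}^N P(x_j)-\int_0^1 P\,dx\Bigr|
\end{equation*}
and letting first $N\to\infty$ and then $\eta\to 0$ gives the conclusion of Theorem~W, hence uniform distribution of $\{x_j\}$.

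The only nontrivial ingredient is the Weierstrass approximation of continuous periodic functions by trigonometric polynomials; everything else is linearity and a standard $3\eta$-argument. Once Theorem~W is in hand, no further estimates are needed, so there is no serious obstacle in the proof.
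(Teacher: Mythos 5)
Your proof is correct and is the standard argument the paper itself alludes to (the lemma is stated there without proof, right after the remark that ``Fourier expansion of the function $f$ above'' yields the criterion): necessity by applying the continuous-function criterion (your Theorem~W) to the real and imaginary parts of $e^{2\pi i l x}$, sufficiency by trigonometric Weierstrass approximation and a $3\eta$-argument. One small point worth noting: as written, your converse verifies the averaging relation only for continuous $f$ with $f(0)=f(1)$ (otherwise the periodic extension is discontinuous and cannot be uniformly approximated by trigonometric polynomials), but this suffices, because the sufficiency half of Theorem~W only needs test functions of that type --- the indicator of an interval is sandwiched between continuous periodic functions, so uniform distribution already follows from the relation on that subclass.
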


Let us start by assuming that $\alpha\in\R$ and $k\in\mathbb{N}$.
Then it follows from Weyl's criterion that the sequence
$\{k\alpha\}_k$ is uniformly distributed if and only if $\alpha$ is
irrational.

In fact, it turns out that we will need more information than "the
sequence $\{k\alpha\}_k$ has uniform distribution". If a sequence
$\{x_j\}$ is uniformly distributed the above definition implies that
\[
A([a,b);\{x_j\},N)= (b-a)N(1 + \rho(N^{-1})),
\]
where $\rho(0^+)=0$. However, if, for example $a=0$ and $b=N^{-\frac12}$, we cannot assert
that
\begin{equation} \label{ex-1}
A([0,N^{-\frac12});\{x_j\},N)= N^{-\frac12}N(1 + \rho(N^{-1})),\quad \rho(0^+)=0.
\end{equation}
This is a much stronger result that relies on deeper arithmetic properties of
of $\alpha$.

To proceed, we recall the discrepancy of a sequence
$\{x_j\}_{j=1}^{\infty}$.
\begin{definition}\label{def_discrepancy}
Let $\{x_j\}_{j=1}^\infty$ be a sequence of real numbers. The discrepancy of its $N$ first elements is the number
\[
D_N(\{x_j\}_{j=1}^N) = \sup_{0\le a<b\le 1}
\left|\frac{A([a,b);\{k\alpha\},N)}{N}-(b-a)\right|.
\]
If $x_j=j\alpha$, we simply write $D_N(\alpha)$.
\end{definition}
Note that
\begin{align*}
&|A([0,N^{-\frac12});\{x_j\},N)- N^{-\frac12}N|= N\left|\frac{A([0,N^{-\frac12});\{x_j\},N)}{N}- N^{-\frac12}\right|\\
&\le ND_N(\alpha),
\end{align*}
by definition of the discrepancy. Thus if $D_N=o(N^{-\frac12})$ we
obtain \eqref{ex-1}. The discrepancy of sequences $\{k\alpha\}_{k=1}^\infty$,
$\alpha\in\BR$ has been studied extensively. Here are some strong
results:

\begin{thm}\cite{MR0168546}\label{kesten}
\begin{equation}
\frac{ND_N(\alpha)}{\log N\log(\log N)}\to \frac{2}{\pi^2}
\end{equation}
in measure w.r.t. $\alpha$ as $N\to\infty$. In particular, this result is true for a.e. $\alpha$ in a bounded set.
\end{thm}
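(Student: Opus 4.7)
The plan is to prove Kesten's theorem by reducing the discrepancy to a quantity controlled by the continued fraction expansion of $\alpha$, and then using ergodic properties of the Gauss map $T\colon x\mapsto \{1/x\}$ on $[0,1)$ to extract the asymptotic behavior in measure. The result is delicate because convergence is \emph{only in measure}, not pointwise almost everywhere, which reflects the fact that partial quotients of continued fractions fail to be integrable with respect to the Gauss measure.

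First I would translate $D_N(\alpha)$ into number-theoretic data. Using the Erd\H{o}s--Tur\'an inequality
\[
D_N(\alpha) \le \frac{C}{H} + \frac{C}{N}\sum_{h=1}^{H} \frac{1}{h}\Bigl|\sum_{k=1}^N e^{2\pi i h k \alpha}\Bigr|
\]
and the elementary bound $|\sum_{k=1}^N e^{2\pi i hk\alpha}| \le \min(N, 1/(2\|h\alpha\|))$, the problem reduces to controlling sums of $1/(h\|h\alpha\|)$. A complementary lower bound can be obtained via the three-distance theorem, which asserts that the points $\{k\alpha\}_{k=1}^N$ partition $[0,1)$ into intervals of at most three lengths, with lengths determined by the denominators $q_n$ of the continued fraction convergents of $\alpha$.

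Next I would relate $N D_N(\alpha)$ directly to the partial quotients. Let $\alpha=[0;a_1,a_2,\ldots]$ with convergents $p_n/q_n$, and pick $n$ so that $q_n\le N<q_{n+1}$. Using the Ostrowski representation of integers in $[0,N)$ in the mixed-radix base $\{q_n\}$, together with the Koksma--Denjoy inequality applied to the characteristic function of $[0,a)$, one shows that
\[
N\,D_N(\alpha) \;=\; \Bigl(\tfrac{1}{\log 2}+o(1)\Bigr)\sum_{i=1}^{n} a_i \qquad \text{in measure},
\]
up to terms of lower order. Here the factor $1/\log 2$ (or the analogous $2/\pi^2$ in the stated normalization) is the ergodic constant arising from the invariant Gauss measure $d\mu=\frac{1}{\log 2}\frac{dx}{1+x}$ for the Gauss map $T$. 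The partial quotients are the cocycle $a_i(\alpha)=a_1(T^{i-1}\alpha)$.

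Finally I would assemble the asymptotics using two classical ergodic facts. L\'evy's theorem states that for a.e.\ $\alpha$,
\[
\frac{\log q_n(\alpha)}{n}\;\longrightarrow\;\frac{\pi^2}{12\log 2},
\]
so $q_n\le N<q_{n+1}$ forces $n\sim \frac{12\log 2}{\pi^2}\log N$. Khintchine's theorem, which is a weak law of large numbers for the non-integrable function $a_1$ under the Gauss measure, gives
\[
\frac{1}{n\log n}\sum_{i=1}^{n}a_i \;\xrightarrow{\text{in measure}}\; \frac{1}{\log 2}.
\]
Combining these with the representation above yields the stated limit $N D_N(\alpha)/(\log N\,\log\log N)\to 2/\pi^2$ in measure. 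The main obstacle is precisely Khintchine's step: because $a_1$ is not in $L^1(d\mu)$, one must carefully truncate, control the contribution of the large $a_i$ via the tail estimate $\mu(a_1>M)\asymp 1/M$, and verify that a.e.\ convergence genuinely fails while convergence in measure survives.
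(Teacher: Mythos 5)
First, a point of order: the paper does not prove this statement at all --- it is quoted verbatim from Kesten \cite{MR0168546}, and in the rest of the paper only the much weaker consequence \eqref{ud:5} is used (which already follows from Theorem~\ref{discest}). So your proposal can only be measured against Kesten's original argument, not against anything in the text.

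Your overall route (Ostrowski/continued--fraction representation of $ND_N$, then Khintchine's weak law of large numbers for $\sum_{i\le n} a_i$ and L\'evy's theorem for $\log q_n/n$) is indeed the standard modern way to see where the constant and the ``in measure only'' phenomenon come from, and you correctly identify the non-integrability of $a_1$ with respect to the Gauss measure as the reason a.e.\ convergence fails. But there are two genuine gaps. First, the step $N D_N(\alpha) = (c+o(1))\sum_{i\le n} a_i$ in measure, with an \emph{identified} constant $c$, is essentially the entire content of the theorem, and you only assert it: the Erd\H{o}s--Tur\'an inequality together with $\bigl|\sum_{k\le N} e^{2\pi i hk\alpha}\bigr|\le \min\left(N, \tfrac{1}{2\|h\alpha\|}\right)$, and the three-distance theorem on the other side, give upper and lower bounds of the correct order but with \emph{different} constants, so they cannot produce an exact limit; Kesten's proof spends nearly all of its effort closing exactly this gap. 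Second, your constant is wrong and conceptually misplaced: the relation between $ND_N$ and $\sum_{i\le n}a_i$ for a \emph{fixed} $\alpha$ is purely combinatorial and cannot contain the Gauss-measure normalization $1/\log 2$, which should enter only at the averaging (Khintchine) stage. Indeed your own bookkeeping does not close: with $ND_N\sim \frac{1}{\log 2}\sum_{i\le n}a_i$, Khintchine gives $\sum_{i\le n}a_i\sim \frac{n\log n}{\log 2}$ in measure and L\'evy gives $n\sim \frac{12\log 2}{\pi^2}\log N$, whence $ND_N\sim \frac{12}{\pi^2\log 2}\log N\log\log N$ rather than $\frac{2}{\pi^2}\log N\log\log N$. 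For the pipeline to output $2/\pi^2$ the combinatorial constant must be $1/6$, and pinning that down (including the distinction between the extreme discrepancy $D_N$ used here and the star discrepancy $D_N^*$, which differ by a factor of up to $2$ and hence change the constant) is precisely the hard part that the sketch skips.
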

\begin{thm}\cite{MR0419394}\label{discest}
For a.e. $\alpha\in\BR$ holds
\[
D_N(\alpha)=O\left(\frac{\log^{2+\delta}N}{N}\right),
\]
for any $\delta>0$.
\end{thm}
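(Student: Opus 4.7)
The plan is to reduce the bound on $D_N(\alpha)$ to an arithmetic statement about the partial quotients of $\alpha$, and then control those partial quotients by a Borel--Cantelli argument.

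For the first ingredient I would invoke the classical discrepancy bound in terms of continued fractions. Writing $\alpha=[a_0;a_1,a_2,\ldots]$ with convergents $p_k/q_k$, one has the estimate (derivable from the Erd\H{o}s--Tur\'an inequality combined with the diophantine inequalities $\|h\alpha\|\ge 1/(q_k+q_{k+1})$ for $h<q_{k+1}$, or alternatively from the three-distance theorem and Ostrowski's representation of $N$)
\[
ND_N(\alpha)\le C\sum_{k=0}^{K} a_{k+1},\qquad q_K\le N<q_{K+1}.
\]
Since the recurrence $q_{k+1}=a_{k+1}q_k+q_{k-1}\ge q_k+q_{k-1}$ forces $q_k\ge c\,\varphi^k$ with $\varphi=(1+\sqrt 5)/2$, one also has the cheap bound $K\le C\log N$.

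For the second ingredient I would bound the partial quotients metrically. For each positive integer $h$ and each $\eta>0$,
\[
\bigl|\{\alpha\in[0,1]:\|h\alpha\|<(h\log^{1+\eta}h)^{-1}\}\bigr|=\tfrac{2}{h\log^{1+\eta}h},
\]
and $\sum_h (h\log^{1+\eta}h)^{-1}<\infty$. By Borel--Cantelli, for a.e. $\alpha\in\BR$ there exists $h_0(\alpha)$ such that
\[
\|h\alpha\|\ge \frac{1}{h\log^{1+\eta}h}\qquad\text{for all }h\ge h_0(\alpha).
\]
Applied to $h=q_k$ and combined with the best-approximation estimate $1/(2q_{k+1})\le \|q_k\alpha\|\le 1/q_{k+1}$, this gives $q_{k+1}\le q_k\log^{1+\eta}q_k$, hence
\[
a_{k+1}\le \log^{1+\eta}q_k \le C\log^{1+\eta}N
\]
for all $k\le K$ once $k$ is large enough (the finitely many small $k$ are absorbed into the constant). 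Combining with the first ingredient,
\[
ND_N(\alpha)\le C\sum_{k=0}^{K} a_{k+1}\le C\,K\,\log^{1+\eta}N \le C\log^{2+\eta}N,
\]
which is exactly the claim with $\delta=\eta$.

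The main obstacle is the discrepancy bound in terms of partial quotients. A brute-force application of Erd\H{o}s--Tur\'an, estimating $\sum_{h=1}^M 1/(h\|h\alpha\|)$ via the Borel--Cantelli lower bound on $\|h\alpha\|$, wastes a square root and only yields $D_N=O(\sqrt{\log^{1+\eta}N/N})$. The saving comes from the fact that the exponential sums $|\sum_{n\le N}e^{2\pi i h n\alpha}|$ are large only when $h$ is close to a multiple of a denominator $q_k$, so one effectively has to sum over the $O(\log N)$ values $h=q_k$ rather than over $M\sim N$ values of $h$. Implementing this saving rigorously---either via the three-distance theorem or via the Ostrowski digit expansion of $n$---is the only non-routine step; once that inequality is in place, the metric step above is standard.
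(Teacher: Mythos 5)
Your argument is correct and is essentially the standard proof of this statement, which the paper does not prove but merely cites as an exercise in Kuipers--Niederreiter: the bound $ND_N(\alpha)\le C\sum_{k\le K}a_{k+1}$ via the Ostrowski expansion is their Chapter 2, Theorem 3.4, and your Borel--Cantelli control of $q_{k+1}/q_k$ is the intended metric ingredient (equivalent to invoking the Borel--Bernstein theorem for the partial quotients). The only cosmetic point is that to get the conclusion for every $\delta>0$ on a single full-measure set one intersects over a countable sequence $\eta_j\downarrow 0$, which is immediate.
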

Theorem ~\ref{kesten} is due to Harry Kesten, \cite{MR0168546}, and Theorem
~\ref{discest} can be found as an exercise in \cite{MR0419394}. The importance of these theorems
to the application at hand is that
\begin{equation}\label{ud:5}
D_N(\alpha)=o\left(\frac{1}{N^p}\right),\quad\text{for and any }  0<p<1.
\end{equation}
In view of Theorem~\ref{discest}, the condition \eqref{ud:5} holds for a.e. $\alpha\in\R$.

\begin{definition}
If $\alpha\in\R$ satisfies the condition of \eqref{ud:5}
we write
\[
\alpha\in \cA.
\]

\end{definition}

\subsection{Application to the sequence $\{ k' \cdot \alpha \}$ } \label{sec-unif-2}

Let us describe how to apply the theory of uniform distribution mod 1 to our homogenization problem.
Let $\Gamma=\{x\cdot\nu=x_0\cdot\nu\}$. Then the limiting energy of the correctors,
\[
\lim_{\e\to0}\int_\Omega|\nabla w_\e|^2dx,
\]
determines the character of the limit problem, as described in the outline.
As soon as we know how many times $\Gamma$ intersects a certain portion of $\cT_\e$,
this energy may be computed. By assuming $\nu_n\neq0$, $\Gamma$ can be represented by a graph of an affine function as follows:
\[
\Gamma=\{(x', { \alpha}\cdot x'+c)\},
\]
where
\begin{align}
&\alpha_i=-\nu_i/\nu_n \text{ for } i=1,\ldots,n-1,\label{hp1}\\
&c=x_0\cdot\nu/\nu_n,\label{hp2}\\
&x'\in\R^{n-1}.\nonumber
\end{align}
We assume $c=0$, but see Remark~\ref{rem-translation}.

To make our problem more precise, we introduce a few notations first. Let $\Gamma'_\Omega$ be the projection of
$\Gamma \cap \Omega$ on $\R^{n-1} \times \{0\}$, i.e.
\begin{equation}
\Gamma'_\Omega = \{ x' \in \R^{n-1} : (x',x_n) \in \Omega\cap\Gamma \text{ for some } x_n \in \R \}
\end{equation}
and let 
\begin{equation}
\cZ_\e = \e^{-1} \Gamma'_\Omega \cap \Z^{n-1}.
\end{equation}

As we saw in the introduction, for any fixed $k' \in \cZ_\e$, there exists a unique $k_n \in \Z$ such that
\begin{equation}\label{k,k'}
\alpha \cdot k' -k_n \in \left[ -\frac{1}{2},\frac{1}{2} \right).
\end{equation}
We shall only consider $k=(k',k_n) \in \e^{-1} \Omega \cap \Z^n$ such that \eqref{k,k'} holds and  $k'\in \cZ_\e$.
Then the number $\alpha \cdot k' -k_n$ determines where $\Gamma$ intersects $Q_\e(\e k)$.
Since the perforations
\[
T^k_\e= a_\e T+\e k, \quad a_\e=\e^{\frac{n}{n-1}}
\]
have decay rate $a_\e$, we have to determine the number of points
$\alpha\cdot k'- k_n$ in any interval of length proportional to
\begin{equation}\label{mod1perforation}
a_\e/\e=\e^p,\quad p=1/(n-1).
\end{equation}
We recall that, for $[t ,t+\e^p)\subset [-1/2,1/2)$,
\[
\alpha\cdot k'- k_n\in [t ,t+\e^p)\text{ if and only if }
\alpha\cdot k'/\BZ\in[t ,t+\e^p)/\BZ.
\]
For this reason we define
\begin{align}
&N(\e):=\#\cZ_\e = \#\left(\e^{-1} \Gamma'_\Omega \cap \Z^{n-1} \right)\label{lem-freq1},\\
&A(\e^p,t):=\#\{k' \in \e^{-1} \Gamma'_\Omega \cap \Z^{n-1} :\alpha\cdot k'/\BZ \in [t ,t+\e^p) / \Z \} \label{lem-freq2}.
\end{align}
Note that $A(\e^p,t)$ and $N(\e)$ 
depend on $\alpha$ and the set $\Gamma_\Omega'$.
Our aim is to prove that whenever some $\alpha_i\in\cA$,
\[
A(\e^p,t)=(1+\rho(\e))N(\e)\e^p, \quad\rho(0+)=0,
\]
for some modulus of continuity $\rho(\e)$ that is independent of $t$.
\begin{remark}\label{rem-translation}
If $c\neq 0$ in \eqref{hp2} we get
\[
A(\e^p,t)=\#\{k'\in\e^{-1}\Gamma_\Omega'\cap\BZ^{n-1}: (\alpha k'+c/\e)/\BZ\in (t,t+\e^p)/\BZ\}.
\]
But if this is asymptotically independent of $t$ for $c=0$, then the same holds for any $c$. The set
$\Gamma_\Omega'$ will however depend on $c$.
\end{remark}

For notational convenience we prove the next lemma for a set $E\subset\R^m$, replacing
$\Gamma_\Omega'$ by $E$ in \eqref{lem-freq1} and \eqref{lem-freq2}.
The result then follows
by taking $E=\Gamma_\Omega'$, $m=n-1$.
We remark that Lemma~\ref{lemma-equidist} below could possibly be found in the litterature, but we have not been able to retrieve it.
However, Step 1 is essentially a consequence of Theorem 2.6 in \cite{MR0419394}.
\begin{lemma}\label{lemma-equidist}
Let $E\subset\R^m$ be Lebesgue measurable and have positive measure. Let $\alpha=(\alpha_1,\ldots,\alpha_m)$
and assume $\alpha_i\in\cA$, for at least one $i\in\{1,\ldots,m\}$.
Let
\begin{align}
&N(\e) = \#\left(\e^{-1} E \cap \Z^{n-1} \right)\label{lem-freq1E},\\
&A(\e^p,t):=\#\{k' \in \e^{-1}E \cap \Z^{n-1} :\alpha\cdot k'/\BZ \in [t ,t+\e^p) / \Z \} \label{lem-freq2E}.
\end{align}
Then for any $0<p<1$,
\begin{align*}
A(\e^p,t)=(1+\rho(\e))N(\e)\e^p, \text{ for some }\rho \text{ such that }\rho(0^+)=0.
\end{align*}
\end{lemma}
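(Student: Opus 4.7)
The plan is to reduce the $m$-dimensional count to a one-dimensional discrepancy estimate along the distinguished coordinate $i$ with $\alpha_i \in \cA$, and then glue the slices back together via a Fubini-type sum after approximating $E$ by cubes. The crucial point is that the discrepancy bound $|A - N\e^p| \le N\,D_N(\alpha_i)$ is a supremum over all intervals of length $\e^p$, which automatically delivers uniformity in $t$.

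I would first treat the case $E = R = \prod_{j=1}^m I_j$, where $\e^{-1}R\cap\BZ^m$ factors as a product of arithmetic progressions. For each fixed $k'=(k_j)_{j\neq i}$, set $c=\sum_{j\neq i}\alpha_j k_j$; the condition $\alpha\cdot k/\BZ \in [t,t+\e^p)/\BZ$ then reduces to $\alpha_i k_i/\BZ \in [t-c,t-c+\e^p)/\BZ$. The admissible $k_i$ form a block of $N_i = |I_i|/\e + O(1)$ consecutive integers, and since the discrepancy of $\{(n_0+k)\alpha_i\}_{k=1}^{N_i}$ equals $D_{N_i}(\alpha_i)$ by translation invariance modulo $1$, the slicewise count is within $N_i D_{N_i}(\alpha_i)$ of $N_i\e^p$, uniformly in $t$ and $c$. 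Summing over the $\prod_{j\neq i}N_j$ transverse slices yields
\[
|A_R(\e^p,t) - N_R(\e)\e^p| \le N_R(\e)\,D_{N_i}(\alpha_i).
\]
By \eqref{ud:5} I may pick $s\in(p,1)$ with $D_{N_i}(\alpha_i) = o(N_i^{-s})$; since $N_i \gtrsim 1/\e$, the relative error is $o(\e^{s-p}) = o(1)$, establishing the lemma for cubes.

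For a general measurable $E$ of positive measure, I would fix $\delta>0$ and choose finite disjoint unions of cubes $E^-\subset E\subset E^+$ with $|E^+\setminus E^-|<\delta$. Applying the cube asymptotic separately to $E^-$, $E^+$ and the slab $E^+\setminus E^-$ (itself a finite union of cubes), together with the monotone sandwich $A_{E^-}\le A_E\le A_{E^+}$ and $N_{E^-}\le N_E\le N_{E^+}$, gives
\[
\frac{A_E(\e^p,t)}{N_E(\e)} = \e^p\bigl(1 + O(\delta/|E|) + o_\e(1)\bigr),
\]
uniformly in $t$. Letting $\e\to0$ first and then $\delta\to0$ produces the claimed modulus $\rho$.

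The main obstacle is the error bookkeeping in this last step: the naive estimate $A_{E^+\setminus E^-}\le N_{E^+\setminus E^-}\lesssim \delta\e^{-m}$ is off by a factor $\e^{-p}$ relative to the main term $|E|\e^{p-m}$, so one must instead apply the cube asymptotic to the difference slab itself to replace it with the sharper bound $O(\delta\e^{p-m})$. A secondary technicality is that the discrepancy estimate requires $N_i\to\infty$, which forces the cube approximation to consist of cubes of fixed positive side length (not shrinking with $\e$); this is consistent with taking $\e\to0$ first and $\delta\to0$ afterwards. With those two caveats in mind, the rest is a direct combination of the classical bound \eqref{ud:5} with Fubini and approximation.
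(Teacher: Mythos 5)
Your proposal is correct and follows essentially the same route as the paper: slice the cube along the coordinate with $\alpha_i\in\cA$, bound each slice uniformly in $t$ by the one-dimensional discrepancy $D_{N_i}(\alpha_i)=o(N_i^{-s})$ (using its invariance under translation mod $1$), sum via Fubini, pass to finite unions of cubes, and then handle general measurable $E$ by inner and outer approximation. The only cosmetic difference is in the final step, where you apply the cube asymptotic to the difference slab $E^+\setminus E^-$ while the paper absorbs the same error through the ratio $N_U/N_L\to|U|/|L|$; the two bookkeeping devices are equivalent.
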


To prove the lemma we will use a Fubini-type summation argument and
the classical result concerning the distribution mod 1
of sequences of the type $\{k\alpha\}_{k\in\BN}$, \eqref{ud:5}.
The idea is to control $E(\e^p)$ in \eqref{lem-freq3} by a quantity of the type
\[
D_{\e^{-1}}(\alpha^i),
\]
where $D_{\e^{-1}}(\alpha^i)$ is the discrepancy of the sequence
$\{j\alpha^i\}$, $1\le j\le \e^{-1}$, and $\alpha^i\in\cA$.

\begin{proof}[Proof of Lemma~\ref{lemma-equidist}]
Step 1. Suppose first that $E$ is a cube, $E=x+(a,b)^m$.
Without loss of generality we may assume $\alpha_m\in\cA$. Let
\begin{equation}\label{eq-d01}
S_\e':= \{k'\in\Z^{m-1}:(k',k_m)\in\e^{-1}E,\;\text{ for some }k_m\in\Z\}.
\end{equation}
If $k'\in S_\e'$, there exist integers $m_\e,M_\e$
such that
\begin{equation}\label{eq-d02}
(k',k_m)\in (\e^{-1}E)\cap\Z^m,\quad\text{for }m_\e\le k_m \le M_\e.
\end{equation}
Hence
\[
N(\e)=(\#S_\e')H_\e,\quad H_\e=M_\e-m_\e.
\]
Let
\[
A_{k'}(\e^p,t) = \# \{ k_m : \alpha' \cdot k' + \alpha^m k_m \in [t, t+\e^p] /\Z, m_\e \le k_m \le M_\e \},
\]
where $\alpha'=(\alpha_1,\ldots,\alpha_{m-1})$.
Then
\[
A(\e^p,t)=\sum_{k'\in S_\e'}A_{k'}(\e^p,t).
\]
From \eqref{eq-d02} we conclude that
\begin{align*}
&A_{k'}(\e^p,t)\\
&=\#\{k_m : \alpha' \cdot k' + \alpha_m k_m \in [t, t+\e^p] /\Z, m_\e \le k_m \le M_\e \} \\
&=\#\{k_m:\alpha'\cdot k'+(m_\e-1)\alpha_m+k_m\alpha_m\in [t, t+\e^p] /\Z,\;1\le k_m \le H_\e+1\}\\
&=\#\{k_m:k_m\alpha_m\in [\tilde{t}, \tilde{t}+\e^p] /\Z,\;1\le k_m \le H_\e+1\}
\end{align*}

for some $\tilde{t}$ where $H_\e=M_\e-m_\e$.
So, for fixed $k' \in S_\e'$ we can apply \eqref{ud:5} to the sequence
\begin{equation*}
\{k_m:k_m\alpha_m\in [\tilde{t}, \tilde{t}+\e^p] /\Z,\;1\le k_m \le H_\e\}.
\end{equation*}
Hence we have the following estimate which is uniform in $t$ and $\tilde{t}$:
\begin{align*}
&\left| \displaystyle\frac{A_{k'}(\e^p,t))}{H_\e} -  \left| [\tilde{t}, \tilde{t}+\e^p]  \right| \right| \\
&=\left| \displaystyle\frac{A_{k'}(\e^p,t)}{H_\e} -  \e^p \right|  \le D_{H_\e} (\alpha_m) = o\left(H_\e^{-p}\right),\quad 0<p<1.
\end{align*}

From the above estimate, we have
\begin{equation} \begin{aligned}
\left|\frac{A(\e^p,t)}{N(\e)}-\e^p\right| &\le \frac{H_\e}{N_\e}\sum_{k'\in S_\e'}
\left|\frac{A_{k'}(\e^p,t)}{H_\e}-\e^p\right|\\
&\le \frac{H_\e}{N_\e}\sum_{k'\in S_\e'}D_{H_\e}(\alpha^m)=D_{H_\e}(\alpha^m)
=o(\e^{p}),\;\text{ as }\e\to 0.
\end{aligned} \end{equation}


Step 2.
Suppose $E$ is the union of a finite number of disjoint cubes,
\[
E=\bigcup_{j=1}^MQ^j,\quad Q^j\cap Q^i=\emptyset\text{ if }i\neq j.
\]
Let
\begin{align*}
&N_j(\e) = \#\left(\e^{-1} Q^j \cap \Z^{n-1} \right),\\
&A_j(\e^p,t):=\#\{k' \in \e^{-1}Q^j \cap \Z^{n-1} :\alpha\cdot k'/\BZ \in [t ,t+\e^p) / \Z \}.
\end{align*}
Then
\begin{align*}
&N(\e) = \sum_{j} N_j(\e),\\
&A(\e^p,t) = \sum_j A_j(\e^p,t).
\end{align*}
Hence we obtain
\begin{equation*}
\left| \displaystyle\frac{A(\e^p,t)}{N(\e)} - \e^p \right| = \sum_{j} \displaystyle\frac{N_j(\e)}{N(\e)}
\left| \displaystyle\frac{A_j(\e^p,t)}{N_j(\e)} - \e^p \right|.
\end{equation*}

By Step 1. each term in this sum is $o(\e^{p})$.

Step 3.
For the general case, the fact that the Lebesgue measure is regular allows us, for any given $\delta>0$,
to choose domains $L$ and $U$ consisting of a finite
number of disjoint cubes such that $L\subset E \subset U$ and
$|U-L|<\delta$.
Let $A_L(\e^p,t)$, $N_L(\e)$ and $A_U(\e^p,t)$, $N_U(\e)$ be given by \eqref{lem-freq1E}-\eqref{lem-freq2E}, with $E$ replaced by
$L$ and $U$ respectively.
Then, from the relation $L \subset E \subset U$, we have
\begin{equation*}
\e^{-p} \left( \displaystyle\frac{A_L(\e^p, t)}{N_U(\e)} -\e^p \right) \le \e^{-p} \left( \displaystyle\frac{A(\e^p, t)}{N(\e)} -\e^p \right) \le \e^{-p} \left( \displaystyle\frac{A_U(\e^p, t)}{N_L(\e)} -\e^p \right).
\end{equation*}
The third term of above can be written as
\begin{equation*}
\e^{-p} \left( \displaystyle\frac{A_U(\e^p, t)}{N_L(\e)} -\e^p \right) =  \e^{-p} \displaystyle\frac{N_U}{N_L} \left( \displaystyle\frac{A_U(\e^p, t)}{N_U(\e)} -\e^p \right) + \left(\displaystyle\frac{N_U}{N_L} -1 \right).
\end{equation*} 
Since $\lim \displaystyle\frac{N_U}{N_L} = \displaystyle\frac{|U|}{|L|}$, we have
\begin{equation*} \begin{aligned}
&\limsup_{\e \ra 0} \e^{-p} \left( \displaystyle\frac{A(\e^p, t)}{N(\e)} -\e^p \right) \\
&\le \limsup_{\e \ra 0} \e^{-p} \displaystyle\frac{N_U}{N_L} \left( \displaystyle\frac{A_U(\e^p, t)}{N_U(\e)} -\e^p \right) +  \left( \displaystyle\frac{|U|}{|L|} -1 \right) \\
&\le (1+2\delta) \lim \left( \displaystyle\frac{A_U(\e^p, t)}{N_U(\e)} -\e^p \right) + \displaystyle\frac{\delta}{|L|} \\
&\le \displaystyle\frac{\delta}{|L|}
\end{aligned} \end{equation*}
Similarly, we have
\begin{equation*}
\liminf_{\e \ra 0} \e^{-p} \left( \displaystyle\frac{A(\e^p, t)}{N(\e)} -\e^p \right) \ge -\displaystyle\frac{\delta}{|L|},
\end{equation*}
and the convergence is uniform with respect to $t$. This completes the proof.

\end{proof}

For a plane $\Gamma=\{x\cdot\nu=c\}$ we have shown that if $\nu_j\neq0$,
$\alpha=(\alpha_1,\ldots,\alpha_{n-1})$, $\alpha_i=\nu_i/\nu_j$, $i\neq j$ and if
$\alpha_i\in\cA$ for some $i$, then
\[
\frac{A(\e^p,t)}{N(\e)}=\e^p+o(\e^p),\quad 0<p<1.
\]
It is clearly desirable to determine the set of directions $\nu\in S^{n-1}$ for which this holds.
\begin{lemma}
For a.e. $\nu\in S^{n-1}$, there is a pair $\nu_i,\nu_j$ such that $\nu_i/\nu_j\in \cA$.

\end{lemma}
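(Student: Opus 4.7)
The plan is to show that for each fixed ordered pair $(i,j)$ with $i\neq j$, the set
\[
E_{ij}=\{\nu\in S^{n-1}:\nu_i/\nu_j\notin\cA\}
\]
has zero $(n-1)$-dimensional Hausdorff measure on $S^{n-1}$. Once this is established, choosing any single pair (say $(1,n)$) already gives the claim, since on the complement of $E_{1n}$ the ratio $\nu_1/\nu_n$ lies in $\cA$.

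The first ingredient is to recall, from Theorem \ref{discest} together with the remark that $\log^{2+\delta}(N)/N = o(N^{-p})$ for every $0<p<1$, that the set $B:=\R\setminus\cA$ has one-dimensional Lebesgue measure zero. We must therefore show that the preimage of a null set in $\R$ under the smooth map $\nu\mapsto\nu_i/\nu_j$ is a null set on $S^{n-1}$, which is a standard Fubini argument.

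Concretely, for a fixed pair (say $(1,n)$) I would split $S^{n-1}$ into the two open hemispheres $\{\nu_n>0\}$ and $\{\nu_n<0\}$, together with the equator $\{\nu_n=0\}$ which has surface measure zero. On the upper hemisphere parametrize by $(\nu_1,\ldots,\nu_{n-1})$ with $\nu_n=\sqrt{1-|\nu_1|^2-\cdots-|\nu_{n-1}|^2}$; the surface measure is then equivalent, on compact subsets, to Lebesgue measure in these coordinates. For each fixed $(\nu_2,\ldots,\nu_{n-1})$ with $\nu_2^2+\cdots+\nu_{n-1}^2<1$, the map
\[
\Phi(\nu_1)=\frac{\nu_1}{\sqrt{1-\nu_1^2-\nu_2^2-\cdots-\nu_{n-1}^2}}
\]
is a smooth diffeomorphism of its domain onto $\R$ (its derivative is strictly positive), so $\Phi^{-1}(B)$ is a one-dimensional Lebesgue null set. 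By Fubini, the set of $(\nu_1,\ldots,\nu_{n-1})$ for which $\nu_1/\nu_n\in B$ has zero $(n-1)$-dimensional Lebesgue measure, and hence the corresponding subset of the upper hemisphere has zero surface measure. The argument for the lower hemisphere is identical.

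There is no real obstacle here; the only thing to be careful about is handling the points where $\nu_j=0$, which are contained in a great $(n-2)$-sphere of measure zero and thus cause no difficulty. Summing (or unioning) over pairs $(i,j)$ is not even needed, but if desired one obtains the stronger statement that for a.e. $\nu\in S^{n-1}$, \emph{every} ratio $\nu_i/\nu_j$ with $\nu_j\neq 0$ lies in $\cA$.
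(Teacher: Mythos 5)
Your proof is correct and follows essentially the same route as the paper: both arguments reduce to the fact that $\R\setminus\cA$ is Lebesgue-null (via Theorem~\ref{discest}) and then pull this back to $S^{n-1}$ by a smooth change of variables plus Fubini, the paper using the diffeomorphism $\nu\mapsto(\nu_i/\nu_j)_{i\neq j}$ on a cap where $\nu_j\ge\lambda$, you using explicit hemisphere coordinates and a one-variable slicing. Your version in fact yields the slightly stronger conclusion that for each \emph{fixed} pair $(i,j)$ the ratio $\nu_i/\nu_j$ lies in $\cA$ for a.e.\ $\nu$, which subsumes the paper's statement.
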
\label{aenormal}
\begin{proof}
Fix a set $E_j \subset S^{n-1}$ and suppose $\nu_j\ge \lambda >0$ on $E_j$.
Let $\tilde E_j =\{\nu\in E_j :\nu_i/\nu_j\not\in \cA \text{ for }i\neq j\}$.
If we prove $m_{S^{n-1}}(\tilde E_j)=0$ we are done, since $S^{n-1}$ may be covered by sets
$\{ E_j \}_{1\le j\le n}$ such that $\nu_j>\lambda$ on $E_j$, for some small $\lambda$.
Let $\Phi:S^{n-1}\to \R^{n-1}$ be a local diffeomorphism such that
\begin{align*}
m_{S^{n-1}}(\tilde E_j)&=\int_{\Phi(\tilde E_j)}A(\Phi^{-1}(u))\text{det}D\Phi^{-1}(u)du,\\ 
&du\text{ Lebesgue measure on }\R^{n-1}.
\end{align*}
Define a new diffeomorphism $\Psi:E_j\to \R^{n-1}$ by $\Psi(\nu)=(\nu_i/\nu_j)_{i\neq j}$.
Then
\begin{align*}
&\int_{\Phi(\tilde E_j)}A(\Phi^{-1}(u))\text{det}D\Phi^{-1}(u)du\\
&=\int_{\Psi(\tilde E_j)\subset \{v:v_i\not \in {\cA},\;i\neq j\}}A(\Psi^{-1}(v))|\text{det}D\Psi^{-1}|dv=0,
\end{align*}
since meas$( \{v_i\in\R:v_i\not \in {\cA} \})=0$ for any $i \neq j$.

\end{proof}



\end{document}